\newtheorem{remark}{Remark}
\newtheorem{theorem}{Theorem}
\newtheorem{lemma}{Lemma}
\newtheorem{example}{Example}
\newtheorem{corollary}{Corollary}
\title{Finite Approximations of Switched Homogeneous Systems for Controller Synthesis}
\author{Danielle C.~Tarraf\footnote{D. C. Tarraf is with the Electrical and Computer Engineering Department at the Johns 
Hopkins University, Baltimore, MD (dtarraf@jhu.edu).}, Alexandre~Megretski and
       Munther A.~Dahleh\footnote{A. Megretski and M. A. Dahleh are with Electrical Engineering and Computer Science Department and
the Laboratory for Information and Decision Systems at the Massachusetts Institute of Technology, Cambridge, MA (ameg@mit.edu, dahleh@mit.edu).}}
\begin{document}

\maketitle

%%%%%%%%%%%%%%%%%%%%%%%%%%%%%%%%%%%%%%%%%%%%%%%%%%%%%%%%%%%%%%%%%%%%%%%%%%%%%%%%
\begin{abstract}

We demonstrate the use of a new, control-oriented notion of finite state approximation for a particular class of hybrid systems.
Specifically, we consider the problem of designing a stabilizing binary output feedback
switching controller for a pair of unstable homogeneous second order systems.
The constructive approach presented in this note, 
in addition to yielding an explicit construction of a deterministic finite state approximate model of the hybrid plant,
allows us to efficiently establish a useable upper bound on the quality of approximation, 
and leads to a discrete optimization problem whose solution immediately provides a certifiably correct-by-design controller for the original system.
The resulting controller consists of a finite state observer for the plant and a corresponding full state feedback switching control law.  

\end{abstract}

% \begin{keyword}
% Hybrid systems, systems over finite alphabets, finite state controllers, binary sensing, discrete actuation, homogeneous systems, robust control, 
% exponential stability, model complexity reduction, approximation error.
% \end{keyword}

%%%%%%%%%%%%%%%%%%%%%%%%%%%%%%%%%%%%%%%%%%%%%%%%%%%%%%%%%%%%%%%%%%%%%%%%%%%%%%%%
\section{Introduction}

\subsection{Motivation and Overview}

Finite state approximations and abstractions of hybrid plants have been explored as a means for addressing 
instances in which the interaction between analog and discrete 
dynamics\footnote{``Analog" and ``discrete" here refer to whether the dynamics 
evolve in a continuous or a discrete state-space;
the independent variable, time, may be continuous or discrete in either case.}
leads to complex analysis 
and synthesis problems that cannot be adequately handled by traditional methods.
Some of the early work in this area explored the use of `qualitative' models, namely non-deterministic automata 
whose output behavior contains that of the original hybrid system \cite{JOUR:Lunze1994}.
In \cite{JOUR:RaiO'Y1998}, 
an approach for deriving non-deterministic finite state approximations of systems with quantized outputs was proposed
and used in conjunction with the supervisory control theory developed by Ramadge and Wonham \cite{JOUR:RamWon1989}
to design controllers meeting the desired specifications.
Another line of research, inspired by formal verification techniques in computer science,
explored the construction of deterministic discrete abstractions of hybrid systems \cite{JOUR:AlHeLP2000}.
Early work in this area focused on identifying classes of systems that admit finite bisimulation abstractions \cite{JOUR:HeKoPV1998,JOUR:LaPaSa2000}.
Having recognized that these classes are fairly limited,
more recent work has focused on finding discrete abstractions that are related to the original hybrid system by an approximate simulation or
bisimulation relation \cite{JOUR:GirPap2007, JOUR:Tabuad2008, JOUR:TaAmAP2008}.
The resulting controller design problem is then a two step procedure in which a finite state supervisory controller is first 
designed, and subsequently refined to yield a certified hybrid controller for the original plant \cite{BOOK:Tabuad2009}.

In this note, we demonstrate the use of a new\footnote{Preliminary versions of this work were presented in 
\cite{CONF:TaMeDa2004, CONF:TMD,BOOKCHAPTER:TMD}.},
 alternative notion of approximation that,
in addition to yielding a finite state deterministic approximate model of the original hybrid system,
allows one to efficiently establish a useable upper bound on the quality of approximation, 
and leads to a discrete optimization problem whose solution immediately provides a certifiably correct-by-design (finite state) controller for the original system.
The notion of finite input/output approximation employed in this note 
is thus potentially better suited to the purpose of {\it control design} than the existing methods
since it allows for a streamlined and efficient {\it synthesis procedure}.
Another promising development is that, 
in contrast to many of the existing methods and our early work on this problem,
we have succeeded in approximating {\it unstable} systems by finite state automata for the purpose of control design.

We demonstrate this approach in a particular setting, 
namely for the problem of designing a stabilizing, binary output feedback, 
switching controller for a pair of unstable, homogeneous, second order systems.
This problem was specifically chosen because several of its simpler formulations have been extensively studied and are well understood 
\cite{JOUR:DeBrPL2000, BOOK:Liberz}.
Necessary and sufficient conditions for stability of switched second order homogeneous systems were derived in \cite{JOUR:Filipp1980}.
A Lyapunov based approach for designing stabilizing full state feedback switching controllers for second order homogenous systems was proposed in \cite{JOUR:HolMar2003}.
While the results available for general homogeneous systems remain limited, the special case of switched linear systems has been more extensively studied.
For instance, in the linear full state feedback analog case,
the existence of a Hurwitz convex combination ($A_{eq}=\alpha A_1 + (1-\alpha)A_2$, $\alpha \in (0,1)$) of a given pair of unstable state matrices ($A_1$ and $A_2$) 
is known to be a necessary \cite{TR:Feron1996} and 
sufficient \cite{JOUR:WiPeDe1998} condition for the
existence of a quadratically stabilizing switching controller.
The switching controller can take on a variety of forms in this case (time-switched controller, hybrid controller, etc...).
In particular, when $A_{eq}$ has a real eigenvalue,
a quadratic switching surface and corresponding state dependent variable structure control law \cite{JOUR:HuGaHu1993} 
can be designed to stabilize the system \cite{JOUR:WiPeDe1998}.
In this case, once designed assuming full state feedback,
the stabilizing controller can be exactly {\it implemented} by transmitting {\it appropriately chosen} binary state information from the plant to the controller,
namely information that the state is currently in one or the other region of the state-space. 
Lyapunov based approaches have also been extended to time-sampled versions of the problem \cite{JOUR:DaRiIu2002,JOUR:LinAnt2009}, 
and to the output feedback case \cite{JOUR:ShWMWK2007}: 
Typically the output is assumed to be a linear function of the state, and the resulting controller is observer-based \cite{TR:Feron1996}. 
In contrast, the setting considered in this note where {\it fixed, binary} sensors
are used leads to a more difficult state estimation (and thus controller synthesis) 
problem\footnote{This setup 
differs significantly from the variable structure controller setup described above,
as the binary sensors are assumed to 
be fixed a priori with no regard to the system dynamics.
As such, there generally is a mismatch between the information provided by the sensors and the binary information needed to implement 
a variable structure switching law. 
The resulting state estimation problem is at the core of the design challenge here.}.
This case has been much less well studied, 
and has only begun to receive attention in the recent past
\cite{JOUR:BenGav2002, BOOKCHAPTER:M, JOUR:SaMeDa2008}.
Specifically, the use of finite state approximations in this context remains minimally explored.

The constructive controller synthesis approach can be summarized as follows:
First, the plant is approximated by a deterministic finite state machine and a usable bound on the quality of approximation
is efficiently established.
Next, a switching control law is designed to robustly stabilize the nominal finite state machine model in the presence of admissible approximation uncertainty. 
The resulting controller, which is thus certifiably correct-by-design, is then also a finite state system. 
Additionally, the procedure by which it is synthesized is computationally tractable in the sense that it is based on a \textit{finite} state nominal model. 

Practically, this design approach may be utilized in applications where very coarse sensors are used to keep operating cost, 
weight or power consumption low. 
Looking ahead, having a systematic procedure for designing control systems specifically for the case where the plant and controller 
interact through finite alphabets points to new paradigms for control over networks in which the amount of information to be encoded and
transmitted over communication channels is significantly reduced. 
Finally, this design procedure may be employed by artificially imposing coarse measurements on a switched system for which there is no other available 
or tractable approach for synthesizing controllers, though the relevant question of how to best impose a finite measurement quantization for 
the control objective of interest is not addressed in this paper.

\subsection{Organization of the Paper}
The paper is organized as follows: 
The statement of the control design problem and an overview of the controller design procedure are described in Section \ref{Sec:StatementandOverview}. 
The algorithms proposed for constructing a finite state approximation of the plant, 
and for computing an a-posteriori bound on the resulting approximation error are presented in Section \ref{Sec:Approximation}. 
Design of the robust stabilizing switching law and the structure of the resulting switching controller are addressed in Section \ref{sec:Design}. 
Illustrative examples are presented in Section \ref{sec:Examples}. 
The paper concludes in Section \ref{sec:Conclusions}, where recommendations for future work are given.   

\subsection{Notation}
The notation used in the paper is fairly standard: 
$\mathbb{Z}_+$, $\mathbb{R}$ and $\mathbb{R}_+$ denote the sets of non-negative integers, reals and non-negative reals, respectively. 
For real interval $I = [a,b)$, $|I| = |b-a|$ denotes the length of $I$.
Given a vector $v \in \mathbb{R}^n$, $v'$ denotes its transpose and $\| v \| = \sqrt{v' v}$ denotes its Euclidean norm. 
Given sets $A$ and $B$, $card(A)$ denotes the cardinality of $A$, $A \times B$ denotes their Cartesian product, $A^{\mathbb{Z}_+}$ denotes 
the set of all infinite sequences taking their values in set $A$ and (boldface) $\mathbf{a}$ denotes an element of $A^{\mathbb{Z}_+}$. 
Given a function $f: A \rightarrow B$ and a proper subset $A' \subset A$, $f|_{A'}$ denotes the restriction of $f$ to $A'$, that is the function 
$f|_{A'} : A' \rightarrow B$ defined by $f|_{A'}(a) = f(a)$ for $a \in A'$. 
Given two functions $f: A \rightarrow B$ and $g:B \rightarrow C$, 
their composition, denoted by $g \circ f$, is a function $g \circ f: A \rightarrow C$ defined by $g \circ f(a) = g(f(a))$. 
Given two real-valued functions $f: A \rightarrow \mathbb{R}$ and $g: A \rightarrow \mathbb{R}$, the notation $f \leq g$ 
is understood to mean that $f(a) \leq g(a)$ for all $a \in A$.

%%%%%%%%%%%%%%%%%%%%%%%%%%%%%%%%%%%%%%%%%%%%%%%%%%%%%%%%%%%%%%%%%%%%%%%%%%%%%%%%
\section{Problem Statement \& Overview of the Design Procedure}
\label{Sec:StatementandOverview}

%%%
\subsection{Problem Statement}
\label{Ssec:ProblemStatement}

Consider a discrete-time plant $P$ described by
\begin{equation}
\label{eq:PlantState}
x(t+1) = f_{u(t)}(x(t)) 
\end{equation}
\begin{equation}
\label{eq:PlantSensorOutput}
y(t) = sign \big( c' x(t) \big) 
\end{equation}
\begin{equation}
\label{eq:PlantPerformanceOutput}
v(t) = log \Big( \frac{\| x(t+1) \|}{\| x(t) \|} \Big)
\end{equation}
where the time index $t \in \mathbb{Z}_+$, state $x(t) \in \mathbb{R}^2$, performance output $v(t) \in \mathbb{R}$, and control 
input $u$ is binary with $u(t) \in \mathcal{U} =\{0,1\}$. 
Sensor output $y$ is also taken to be binary, $y(t) \in \mathcal{Y} = \{-1,1\}$, with the understanding that when $c'x=0$, 
$y$ is taken to be $+1$ in one quadrant and $-1$ in the other.
Functions $f_{u} : \mathbb{R}^2 \rightarrow \mathbb{R}^2$ and vector $c \in \mathbb{R}^{2}$, $c \neq 0$, are given. 
The following conditions are assumed to hold for each $u \in \{0,1\}$:
\begin{enumerate}
\item $f_{u}$ is continuous.
\item $f_u$ is homogeneous with degree 1: That is, $f_{u}(\alpha x)= \alpha f_{u}(x)$ for all 
$\alpha \in \mathbb{R}$, $x \in \mathbb{R}^2$.
\end{enumerate}
In this setup, the effect of the control action is to pick a choice of system from a given pair and to hold that choice until the next 
time step, at which point a new measurement is available and a new choice of system is made and implemented.

The objective is to design a controller $K \subset \mathcal{Y}^{\mathbb{Z}_+} \times \mathcal{U}^{\mathbb{Z}_+}$ 
such that the closed loop system $(P,K)$ with output $v$ (Figure \ref{fig:DesignSetup}) satisfies the following performance 
objective for some $R > 0$, for any initial condition of plant $P$:
\begin{equation}
\label{eq:PerfObjective}
\sup_{T \geq 0} \sum_{t=0}^{T} \Big( v(t) + R \Big) < \infty 
\end{equation}
Satisfaction of this performance objective guarantees that the state of the closed loop system (globally) exponentially converges to the origin, 
on average, at a rate not less than $R$.

  \begin{figure}[htbp]
     \begin{center}
     \includegraphics[scale=0.5]{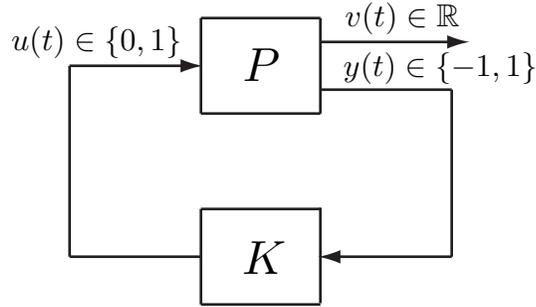}
     \caption{Closed loop system.}
     \label{fig:DesignSetup}
     \end{center}
   \end{figure}

%%%
\subsection{Overview of the Controller Design Procedure}
\label{ssec:SolutionOverview}

Design of the stabilizing controller is an iterative procedure, with each iteration consisting of a sequence of steps. 
First, the plant $P$ is approximated by the interconnection of a finite state machine $M$ and a complex system $\Delta$ representing the approximation 
error (Figure \ref{fig:Approximation}). 
Next, a meaningful and useable ``gain bound'' is established for the approximation error, system $\Delta$. 
Finally, an attempt is made to synthesize a feedback switching law for the nominal finite state model $M$, 
that is robust to the approximation error $\Delta$. 
If synthesis is successful, the resulting controller is guaranteed to globally exponentially stabilize the plant at some verified rate $R$. 
Otherwise if synthesis is unsuccessful, or if the verified rate is unsatisfactory, a more refined approximation 
(meaning a finite state machine with a larger number of states) is sought for the plant and the above process is repeated.

   \begin{figure}[thpb]
      \begin{center}
      \includegraphics[scale=0.5]{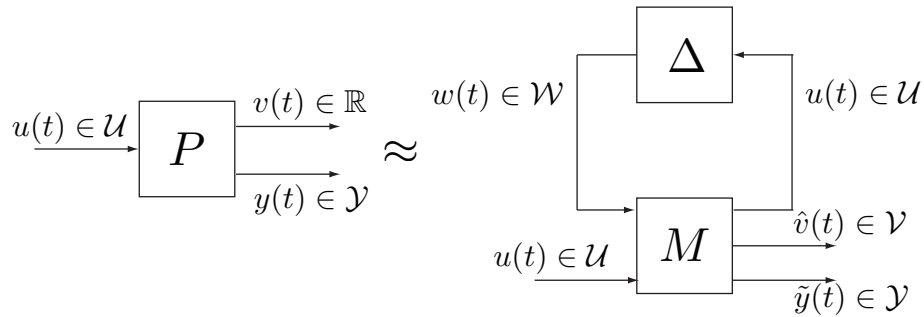}
      \caption{Plant $P$, its finite state machine approximation $M$, and the corresponding error system $\Delta$.}
      \label{fig:Approximation}
      \end{center}
   \end{figure}

%%%%%%%%%%%%%%%%%%%%%%%%%%%%%%%%%%%%%%%%%%%%%%%%%%%%%%%%%%%%%%%%%%%%%%%%%%%%%%%%
\section{A Finite State Approximation of the Plant}
\label{Sec:Approximation}
 
A deterministic finite state machine (DFM for short) is understood to be a discrete-time dynamical system (independent variable $t \in \mathbb{Z}_+$) 
described by a state transition equation (\ref{eq:DFMstatetransition}) and an output equation (\ref{eq:DFMoutput})
\begin{eqnarray}
\label{eq:DFMstatetransition}
q(t+1) & = & f(q(t),u(t)) \\
\label{eq:DFMoutput}
y(t) & = & g(q(t,u(t)))
\end{eqnarray}
State $q(t) \in \mathcal{Q}$, input $u(t) \in \mathcal{U}$, and output $y(t) \in \mathcal{Y}$, 
where $\mathcal{Q}$, $\mathcal{U}$ and $\mathcal{Y}$ are \textit{finite} state, input alphabet and output alphabet sets, respectively.

%%%%%%%%%%%%%%%%%%%%%%%%%%%%
%% Finite state machine models
\subsection{A Notion of Approximation} 

Given a plant $P$ with binary control input $u(t) \in \mathcal{U}=\{0,1\}$, binary sensor output $y(t) \in \mathcal{Y}=\{-1,1\}$ and upper bounded\footnote{It will
become clear in Section \ref{ssec:Mconstruction} that $v$ is indeed upper bounded for the class of systems under consideration.} 
performance output $v(t) \in \mathbb{R}$. 
Consider a system $M$ with inputs $u(t) \in \mathcal{U}$ and $w(t) \in \mathcal{W} = \{0,1\}$, 
and with outputs $u(t) \in \mathcal{U}$, $\tilde{y}(t) \in \mathcal{Y}$ and $\hat{v}(t) \in \mathcal{V}$, where $\mathcal{V}$ is a finite discrete subset of $\mathbb{R}$.
Suppose that $M$ has the internal structure shown in Figure \ref{fig:MStructure}, where $\hat{M}$ is a deterministic 
finite state machine. 
To ensure well-posedness, we require that there be no direct feedthrough from input $\tilde{y}$ to output $\hat{y}$ in $\hat{M}$:
In other words, $\hat{y}(t)$ is only allowed to be a function of the input $u(t)$ and the state of $\hat{M}$ at time $t$.
Memoryless system $\phi$ is defined by
\begin{displaymath}
\tilde{y} = 
\left\{ \begin{array} {ll}
\hat{y} & \mbox{  if  } w=0 \\
-\hat{y} & \mbox{  if  } w=1 
\end{array} \right.
\end{displaymath}
System $M$ is thus a deterministic finite state machine. 

Consider also the corresponding system $\Delta$ shown in Figure \ref{fig:DeltaSystem}. 
Memoryless system $\Psi$ is defined by
\begin{displaymath}
w = \left\{ \begin{array} {ll}
0 & \mbox{  if  } y = \hat{y} \\
1 & \mbox{  if  } y \neq \hat{y}
\end{array} \right.
\end{displaymath}
System $\Delta$ is not a deterministic finite state machine in general, unless plant $P$ is one.
Moreover, the existing requirement that there be no direct feedthrough from $y$ to $\hat{y}$ in $\hat{M}$ 
ensures that the outputs $y$ of $P$ and $\hat{y}$ of $\hat{M}$ cannot be trivially matched.

   \begin{figure}[thpb]
      \begin{center}
      \includegraphics[scale=0.5]{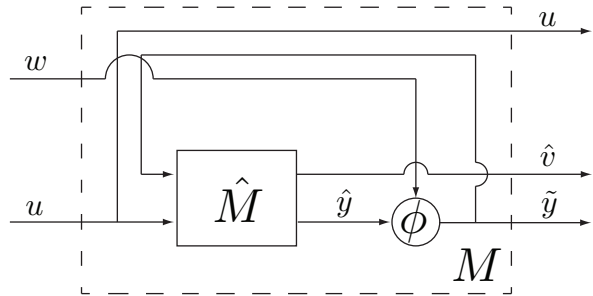}
      \caption{Internal structure of $M$, the deterministic finite state machine approximation of $P$.}
      \label{fig:MStructure}
      \end{center}
   \end{figure}

Now consider the interconnection, as in Figure \ref{fig:Approximation}, of $M$ and $\Delta$ with these particular structures, and suppose 
that the two copies of $\hat{M}$ are identically initialized. Regardless of the choice of $\hat{M}$, it can be seen by direct inspection that 
for arbitrary initial conditions of $P$ and for any input $\mathbf{u} \in \mathcal{U}^{\mathbb{Z}_+}$, the corresponding outputs 
$\mathbf{y} \in \mathcal{Y}^{\mathbb{Z}_+}$ of $P$ and $\tilde{\mathbf{y}} \in \mathcal{Y}^{\mathbb{Z}_+}$ of $(M,\Delta)$ are identical. 

Suppose that we can construct $\hat{M}$ such that the following two conditions are satisfied for any input 
$\mathbf{u} \in \mathcal{U}^{\mathbb{Z}_+}$ and any initial condition of $P$:
\begin{enumerate}
\item{} Outputs $\mathbf{\hat{v}} \in \mathcal{V}^{\mathbb{Z}_+}$ of the interconnection $(M,\Delta)$ and $\mathbf{v} \in \mathbb{R}^{\mathbb{Z}_+}$ 
of $P$ satisfy
\begin{equation}
\label{eq:vBound}
\hat{v}(t) \geq v(t), \mbox{  for all  } t \in \mathbb{Z}_+.
\end{equation}
\item{} For $\rho: \mathcal{U} \rightarrow \{1\}$ and $\mu:\mathcal{W} \rightarrow \mathbb{R}_+$ defined by $\mu(w)=w$, 
there exists a constant $\gamma \in (0,1)$ such that every feasible input/output pair of signals 
$(\mathbf{u},\mathbf{w}) \in \mathcal{U}^{\mathbb{Z}_+} \times \mathcal{W}^{\mathbb{Z}_+}$ of the 
error system $\Delta$ satisfies the gain condition:
\begin{equation}
\label{eq:gainofDelta}
\inf_{T \geq 0} \sum_{t=0}^{T} \gamma \rho(u(t)) - \mu(w(t)) > - \infty.
\end{equation}
\end{enumerate}
The resulting deterministic finite state machine $M$ is then said to be a \textit{finite state approximation} of $P$, and the corresponding 
system $\Delta$ is said to be the \textit{approximation error}. 
The following remarks aim to clarify the reasoning behind this approach to plant approximation.

   \begin{figure}[thpb]
      \begin{center}
      \includegraphics[scale=0.5]{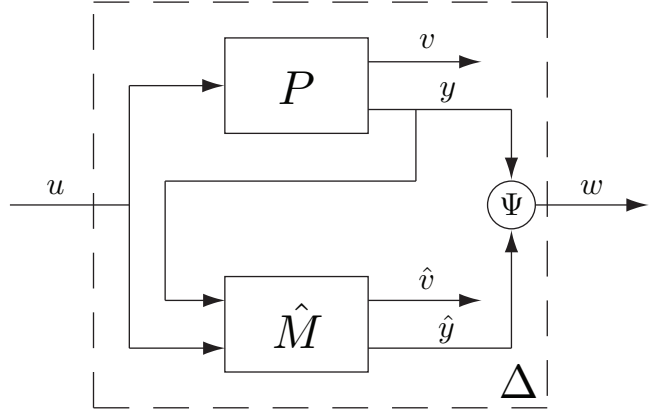}
      \caption{The error system $\Delta$ of $\hat{M}$ and $P$.}
      \label{fig:DeltaSystem}
      \end{center}
   \end{figure}

\begin{remark}
The structure of the approximation error $\Delta$ proposed in this setting is significantly different from that of the approximation error in 
the traditional stable LTI model reduction setting. 
In that setting, a stable lower order LTI model is considered to be a good approximation 
of the original stable LTI system if the outputs of the two systems are not too different, in a worst case sense, when driven side by side by the same input. 
The structure proposed here, where the output of the plant $P$ is fed back to $\hat{M}$, is needed because the original system is not stable: 
Two copies of $P$ initialized differently and driven side by side may end up with different outputs at every step. 
Thus, there is a need to explicitly 'estimate' the initial condition of $P$, otherwise there is no hope for satisfying gain condition (\ref{eq:gainofDelta}) for any choice of $\hat{M}$.
\end{remark}

\begin{remark}
The first condition characterizing a valid finite state approximation, namely the inequality in (\ref{eq:vBound}), is motivated by the control objective (\ref{eq:PerfObjective}) at hand.
In particular, if (\ref{eq:vBound}) holds and a controller $\varphi \subset \mathcal{Y}^{\mathbb{Z}_+} \times \mathcal{U}^{\mathbb{Z}_+}$ is designed to ensure that the closed loop system $(M,\Delta,\varphi)$ satisfies the auxiliary 
performance objective 
\begin{equation}
\label{eq:AuxPerfObject}
\sup_{T \geq 0} \sum_{t=0}^{T} \hat{v}(t) + R < \infty,
\end{equation}
then a corresponding controller implemented in feedback with plant $P$ is guaranteed to satisfy performance objective (\ref{eq:PerfObjective}): 
We will return to address this statement in more detail in Section \ref{sec:Design}.
Upper boundedness of signal $v$ ensures that (\ref{eq:vBound}) can always be satisfied for some appropriate finite choice of $\mathcal{V}$. 
Intuitively, a better approximation in which the instantaneous difference between $v$ and $\hat{v}$ is smaller 
is desirable and leads to a less conservative controller design.
\end{remark} 

\begin{remark}
The second condition characterizing a valid finite state approximation, namely the 'gain condition' describing the approximation error $\Delta$, 
is compatible with the framework and the tools for robustness analysis of systems over finite alphabets developed in \cite{JOUR:TMD}. 
The choice of functions $\rho$ and $\mu$ proposed here are specific to the control objective at hand.
In particular, the smallest value of $\gamma$ for which (\ref{eq:gainofDelta}) holds (the 'gain' of $\Delta$) represents 
the fraction of time (computed over an infinite window) that the outputs of $P$ and $\hat{M}$ disagree in the worst-case scenario.  
A smaller value of $\gamma$ is thus desirable and indicative of a better approximation. 
\end{remark}

In the following sections, a constructive procedure for generating a viable $\hat{M}$ and a computationally efficient algorithm for computing an 
a-posteriori upper bound on the gain of the resulting error system $\Delta$ are proposed.

%%%%%%%%%%%%%%%%%%%%%%%%%
%% Construction of M
\subsection{Construction of the Nominal Model}
\label{ssec:Mconstruction}

The approach proposed for constructing a nominal finite state model to approximate plant $P$ takes advantage of the dynamical 
properties specific to homogeneous systems, evident after a coordinate transformation; 
interested readers are referred to \cite{THESIS:Ta} for an overview of other potential approaches.
Let 
\begin{displaymath}
x= \left[ \begin{array}{c} x_1 \\ x_2 \end{array} \right], \textrm{     }
c= \left[ \begin{array}{c} c_1 \\ c_2 \end{array} \right], \textrm{     }
f_u (.) = \left[ \begin{array}{c} f_{u1}(.) \\ f_{u2}(.) \end{array} \right]
\end{displaymath}

In a polar coordinate system where $r = \sqrt{ x_1^{2} + x_{2}^{2}}$ and $\displaystyle \theta = \tan^{-1} \left( \frac{x_2}{x_1} \right)$, 
the dynamics of system $P$ described in (\ref{eq:PlantState}), (\ref{eq:PlantSensorOutput}) and (\ref{eq:PlantPerformanceOutput}) are given by:
\begin{displaymath}
r(t+1) = r(t) \Big\| f_{u(t)} \big(\beta(\theta(t)) \big) \Big\|
\end{displaymath}
\begin{equation}
\label{eq:SystemPpolar}
\theta(t+1) = \tan^{-1} \Big( \frac{ f_{u(t)2}\big( \beta(\theta(t)) \big) } { f_{u(t)1}\big( \beta(\theta(t)) \big) } \Big)
\end{equation}
\begin{equation}
\label{eq:SystemPoutputy}
y(t) = sign \Big( c' \beta(\theta(t)) \Big)
\end{equation}
\begin{equation}
\label{eq:SystemPoutputv}
v(t)=log \Big( \Big\| f_{u(t)} \big(\beta(\theta(t)) \big) \Big\| \Big)
\end{equation}
where 
\begin{displaymath}
\beta(\theta) = \left[ \begin{array} {c} \cos{\theta} \\ \sin{\theta} \end{array} \right]. 
\end{displaymath}

This coordinate transformation highlights two important properties of the class of systems under consideration:
\begin{enumerate}
\item The evolutions of the angular coordinate $\theta$ and of both outputs of $P$ are independent of the radial coordinate $r$:
The state of the system relevant to the stabilization problem at hand effectively evolves on the unit circle. 
\item For brevity of notation in the following argument, denote the composition $\beta \circ \theta $ simply by $\beta$. 
For each $t \in \mathbb{Z}_+$, we have $\beta(t) \in [-1,1] \times [-1,1] = R_1$, 
$f_{u(t)}(\beta(t)) \in f_{u(t)}(R_1) = R_2^{u(t)}$,
where $R_2^{0}$ and $R_2^{1}$ are again compact in $\mathbb{R}^2$ since each $f_u$ is continuous.
Thus we have 
\begin{displaymath}
\|f_{u(t)}(\beta(t))\| \leq \max_{x \in R_2^u, \\ u \in \{0,1\}} \|x\| = M
\end{displaymath}
and
\begin{displaymath}
v(t) = log \Big( \|f_{u(t)}(\beta(t))\| \Big) \leq  logM, \textrm{    for all } t \in \mathbb{Z}_+.
\end{displaymath}
It thus follows that $v$ is upper bounded.
\end{enumerate}
Compactness of the effective state set and upper boundedness of the performance output $v$ are instrumental in ensuring that 
this class of plants is amenable to a finite state approximation. 

$\hat{M}$ is constructed by partitioning the unit circle into a collection of intervals and defining the potential states of $\hat{M}$ to correspond 
to the intervals and the unions of adjacent intervals.
This partition generally need not consist of equal length intervals; however, it should be chosen such that the outputs associated with all the (analog) states 
of $P$ whose angular coordinates lie in any given interval are identical, in order to avoid introducing unnecessary uncertainty.

In particular, consider a partition of the unit circle consisting of intervals $I_{1}, \ldots, I_{2n}$ where $ I_{i} = [\alpha_{i},\alpha_{i+1} )$ 
for some sequence of angles $\alpha_{1} < \ldots < \alpha_{2n+1}$ satisfying:
\begin{displaymath}
\alpha_{1} = tan^{-1} \Big( -\frac{c_1}{c_2} \Big), \textrm{     } \alpha_1 \in [0,\pi)
\end{displaymath}
\begin{displaymath}
\alpha_{n+1} = \alpha_{1} + \pi
\end{displaymath}
\begin{displaymath}
\alpha_{2n+1} = \alpha_{1} + 2 \pi
\end{displaymath}
The number and choice of angles is a design parameter here.

Construct a set $\hat{\mathcal{Q}}$ of intervals on the unit circle:
\begin{displaymath}
\hat{\mathcal{Q}} = S_1 \cup S_2 \cup \hdots \cup S_{2n}
\end{displaymath}
where $S_k$ is the set of all "$k$ adjacent intervals". That is:
\begin{eqnarray*}
S_1 & = & \{I_1,I_2, \hdots, I_{2n}\} \\
S_2 & = & \{I_1 \cup I_2, I_2 \cup I_3, \hdots, I_{2n} \cup I_1 \} \\
S_3 & = & \{I_1 \cup I_2 \cup I_3, I_2 \cup I_3 \cup I_4, \hdots, I_{2n} \cup I_1 \cup I_2 \} \\
& & \vdots \\
S_{2n} & = & \{I_1 \cup I_2 \cup \hdots \cup I_{2n} \}
\end{eqnarray*}
This set $\hat{\mathcal{Q}}$, consisting of $2n(2n-1)+1$ distinct elements, is the set of all \textit{potential states} of $\hat{M}$.
It will become clear shortly that continuity of $f_u$ rules out the union of two non-adjacent quantization intervals from being a potential state of $\hat{M}$ 
in this setup. 

For $q \in \hat{\mathcal{Q}}$, let
\begin{displaymath}
\mathcal{P}_1(q) = \left\{ \begin{array}{cl}
q \cap [\alpha_1,\alpha_{n+1}) & \textrm{     if     } sign(c'\beta(\alpha_2))=1\\
q \cap [\alpha_{n+1},\alpha_{2n+1}) & \textrm{     otherwise} 
\end{array} \right.
\end{displaymath}
and 
\begin{displaymath}
\mathcal{P}_{-1}(q) = \left\{ \begin{array}{cl}
q \cap [\alpha_1,\alpha_{n+1}) & \textrm{     if     } sign(c'\beta(\alpha_2))=-1\\
q \cap [\alpha_{n+1},\alpha_{2n+1}) & \textrm{     otherwise} 
\end{array} \right.
\end{displaymath}
Note that $\mathcal{P}_1(q) \in \hat{Q}$ and $\mathcal{P}_{-1}(q) \in \hat{Q}$, by construction. 
For $u \in \mathcal{U}$, $q \in \hat{\mathcal{Q}}$, let
\begin{displaymath}
q_{+}^{u} = tan^{-1}\Big( \frac{f_{u2}(\beta(q))}{f_{u1}(\beta(q))} \Big)
\end{displaymath}
and
\begin{displaymath}
\mathcal{I}_{q}^{u} = \Big\{ i \in \{1,\hdots,2n\} \Big|  q_+^{u} \cap [\alpha_i,\alpha_{i+1}) \neq \emptyset \Big\}.
\end{displaymath}

\begin{remark}
It follows from the continuity of $f_u$ that for any choice of $u \in \mathcal{U}$ and $q \in \hat{\mathcal{Q}}$, 
$q_+^{u}$ is a single connected interval. Thus 
\begin{displaymath}
\bigcup_{i \in I_{q}^{u}} [ \alpha_i, \alpha_{i+1} )
\end{displaymath}
is an interval of the form $[\alpha_j,\alpha_k)$, in other words, an element of $\hat{\mathcal{Q}}$.
\end{remark}

The dynamics of $\hat{M}$ are then given by
\begin{eqnarray}
\label{eq:Mhatpotential}
\nonumber
q(t+1) & = & \hat{f}(q(t),u(t),\tilde{y}(t)) \\ 
\hat{y}(t) & = & \hat{g} (q(t)) \\ \nonumber
\hat{v}(t) & = & \hat{h} (q(t),u(t))
\end{eqnarray}
with $\hat{f} : \hat{\mathcal{Q}} \times \mathcal{U} \times \mathcal{Y} \rightarrow \hat{\mathcal{Q}}$, 
$\hat{g} : \hat{\mathcal{Q}} \rightarrow \mathcal{Y}$ and $\hat{h} : \hat{\mathcal{Q}} \times \mathcal{U} \rightarrow \mathcal{V}$ defined by
\begin{eqnarray*} 
\hat{f}(q,u,\tilde{y}) & = & \bigcup_{i \in \mathcal{I}_{\mathcal{P}_{\tilde{y}}(q)}^{u}}  [\alpha_i,\alpha_{i+1}) \\
\hat{g}(q) & = & \left\{ \begin{array}{lll}
1 & \textrm{   } & \textrm{if  } q = \mathcal{P}_1(q) \textrm{    or    } |\mathcal{P}_1(q)| \geq |\mathcal{P}_{-1}(q)|\\
-1 & \textrm{   } & \textrm{if  } q = \mathcal{P}_{-1}(q) \textrm{    or    } |\mathcal{P}_1(q)| < |\mathcal{P}_{-1}(q)|\\
\end{array} \right. \\
\hat{h}(q,u) & = & \sup_{\theta \in q} log \Big(\| f_{u} \big( \beta(\theta) \big) \| \Big)
\end{eqnarray*}

In particular, let $q_o$ denote the potential state corresponding to the whole unit circle, that is $q_o = [\alpha_1, \alpha_{2n+1} )$. 
It is shown in the following Lemma that condition (\ref{eq:vBound}) is satisfied provided $\hat{M}$ is initialized to this state.
\begin{lemma}
\label{lemma:MhatProperty}
Consider a plant $P$ and let $\hat{M}$ be the corresponding finite state machine defined by (\ref{eq:Mhatpotential}) for some choice of 
integer $n>0$ and (admissible) angles $\alpha_{1}$, $\hdots$ , $\alpha_{2n+1}$.
If $q(0)=q_o$, then for any input $\mathbf{u} \in \mathcal{U}^{\mathbb{Z}_+}$ and for any initial condition of $P$, the outputs 
$\mathbf{\hat{v}} \in \mathcal{V}^{\mathbb{Z}_+}$ of $(M,\Delta)$ and 
$\mathbf{v} \in \mathbb{R}^{\mathbb{Z}_+}$ of $P$ satisfy (\ref{eq:vBound}).
\end{lemma}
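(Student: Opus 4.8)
The plan is to track the angular coordinate $\theta(t)$ of the plant relative to the potential state $q(t)$ of $\hat{M}$, and to show by induction on $t$ that $\theta(t) \in q(t)$ always holds; the bound (\ref{eq:vBound}) then follows immediately from the definition of $\hat h$ as a supremum over the state interval. Concretely, I would prove the invariant: for every $t \in \mathbb{Z}_+$ and every initial condition of $P$ and every input $\mathbf u$, one has $\theta(t) \in q(t)$, where $\theta(t)$ is the angular coordinate of $x(t)$ (well-defined modulo the identification on the unit circle) and $q(t)$ is the state of $\hat M$ driven by $\mathbf u$ together with the \emph{true} sensor output $\mathbf y$ of $P$ (recall from the direct-inspection remark that, with the two copies of $\hat M$ identically initialized, the $\tilde y$ fed to $\hat M$ inside $(M,\Delta)$ coincides with the plant output $y$).

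The base case is where the design choice $q(0) = q_o = [\alpha_1,\alpha_{2n+1})$ pays off: the whole unit circle contains $\theta(0)$ no matter what the initial condition of $P$ is. For the inductive step, assume $\theta(t) \in q(t)$. First I would argue that $\theta(t) \in \mathcal{P}_{y(t)}(q(t))$: the map $\mathcal{P}_{\tilde y}$ intersects $q(t)$ with whichever of the two half-circles $[\alpha_1,\alpha_{n+1})$, $[\alpha_{n+1},\alpha_{2n+1})$ is consistent with the observed sensor value, and since the partition was chosen so that $\mathrm{sign}(c'\beta(\theta))$ is constant on each $I_i$ — and the dividing angles $\alpha_1$, $\alpha_{n+1}$ are exactly where $c'\beta$ changes sign — the actual $\theta(t)$, producing $y(t)$, must lie in that same half-circle, hence in $\mathcal{P}_{y(t)}(q(t))$. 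Next, applying $f_{u(t)}$ and passing to the new angle: $\theta(t+1) = \tan^{-1}\!\big(f_{u(t)2}(\beta(\theta(t)))/f_{u(t)1}(\beta(\theta(t)))\big)$ lies in the set $q_+^{u(t)}$ evaluated at the interval $\mathcal{P}_{y(t)}(q(t))$, because $\beta(\theta(t))$ is one of the points $\beta(q)$ ranges over. By the remark on continuity, $q_+^{u(t)}$ over that interval is a single connected arc, and $\mathcal{I}$ collects exactly the partition cells it meets, so $\bigcup_{i \in \mathcal{I}} [\alpha_i,\alpha_{i+1}) = \hat f(q(t),u(t),y(t)) = q(t+1)$ contains $\theta(t+1)$. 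This closes the induction.

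Finally, from $\theta(t) \in q(t)$ and (\ref{eq:SystemPoutputv}) we get
\begin{displaymath}
v(t) = \log\!\big(\|f_{u(t)}(\beta(\theta(t)))\|\big) \le \sup_{\theta \in q(t)} \log\!\big(\|f_{u(t)}(\beta(\theta))\|\big) = \hat h(q(t),u(t)) = \hat v(t),
\end{displaymath}
which is exactly (\ref{eq:vBound}); the supremum is finite by the compactness/upper-boundedness argument already established for the class of plants considered, so $\hat v(t) \in \mathcal V$ is legitimate.

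I expect the main obstacle to be the careful bookkeeping around the sensor-output step — namely verifying that the \emph{observed} $y(t)$ really does pin $\theta(t)$ down to the correct half-circle $\mathcal{P}_{y(t)}(q(t))$. This requires being precise about the placement of $\alpha_1$ and $\alpha_{n+1}$ as the zero-crossings of $c'\beta$, about the convention for $y$ when $c'x = 0$, and about the fact that $\mathrm{sign}(c'\beta(\alpha_2))$ correctly labels which half-circle is the ``$y=1$'' side. A secondary subtlety is the modular/wraparound structure of the unit circle (arcs crossing $\alpha_{2n+1} \equiv \alpha_1$), which must be handled so that ``single connected interval of the form $[\alpha_j,\alpha_k)$'' and the union $\bigcup_{i\in\mathcal I}[\alpha_i,\alpha_{i+1})$ are genuinely well-defined elements of $\hat{\mathcal Q}$ — but this is exactly what the preceding two remarks grant, so invoking them should suffice.
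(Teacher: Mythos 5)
Your proof is correct and follows essentially the same route as the paper's: an induction establishing the invariant $\theta(t) \in q(t)$ (with the base case secured by $q(0)=q_o$ covering the whole circle), from which (\ref{eq:vBound}) follows since $\hat h$ is defined as a supremum over $q(t)$. The paper states the two inductive implications as following ``from the construction of $\hat M$'' without elaboration, so your unpacking of the sensor-consistency step via $\mathcal{P}_{y(t)}$ and the image step via $q_+^{u(t)}$ is simply a more explicit rendering of the same argument.
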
 

\begin{proof} Let $\theta(t)$ and $q(t)$ denote the states of $P$ and $\hat{M}$, respectively, at time $t$. It follows from the 
construction of $\hat{M}$ that:
\begin{enumerate}
\item{} $\theta(t) \in q(t) \Rightarrow \hat{v}(t) \geq v(t)$
\item{} $\theta(t) \in q(t) \Rightarrow \theta(t+1) \in q(t+1)$
\end{enumerate}
We have $\theta(0) \in [0,2\pi) = [\alpha_1, \alpha_1 + 2 \pi) = [\alpha_1,\alpha_{2n+1})$. When $\hat{M}$ is initialized to $q(0)=q_o$, $\theta(0) \in q(0)$, hence $\hat{v}(0) \geq v(0)$ and $\theta(1) \in q(1)$.
The statement of the Lemma thus follows by induction on $t$.
\end{proof}

Since the initial state of $\hat{M}$ will be fixed to $q_o$, the \textit{actual} states of $\hat{M}$ consist of those states in 
$\hat{\mathcal{Q}}$ that are reachable from $q_o$. 
The problem of computing the reachable subset $\mathcal{Q}$ can be recast as either one of two well-studied problems:
(i) A one-to-all network shortest path problem, which can be efficiently solved (polynomial time in $n$) using any of the available shortest path algorithms 
(Dijkstra's, Bellman-Ford, $\hdots$) \cite{BOOK:AhMaOr}, 
or (ii) the problem of computing the accessible states of an automaton, which can be efficiently solved by constructing the transition tree of the automaton \cite{BOOK:Lawson}.
Thus any of a standard collection of algorithms can be used to compute $\mathcal{Q}$; 
interested readers are referred to the above two references for details of the various algorithms.

The dynamics of $\hat{M}$ are thus given by:
\begin{eqnarray}
\label{eq:Mhat}
\nonumber
q(t+1) & = & f(q(t),u(t),\tilde{y}(t)) \\ 
\hat{y}(t) & = & g (q(t)) \\ \nonumber
\hat{v}(t) & = & h (q(t),u(t))
\end{eqnarray}
where the state transition function and output functions are given by $f  = \hat{f}|_{\mathcal{Q} \times \mathcal{U} \times \mathcal{Y}}$ and
$g = \hat{g}|_{\mathcal{Q}} $, $h = \hat{h}|_{\mathcal{Q} \times \mathcal{U}}$, respectively.

%%%%%%%%%%%%%%%%%%%%%%%%%%%
%% Description of Delta
\subsection{Description of the Approximation Error}

In this section, a procedure for computing an upper bound on the gain of the error system $\Delta$ associated with a given plant $P$ 
and a corresponding nominal finite state model $M$, constructed as described in the previous section, is presented. 
As before, $\mathcal{Q}$ refers to the actual states of $\hat{M}$.

\begin{lemma}
\label{thm:GammaBound}
If there exists a function $V : \mathcal{Q} \rightarrow \mathbb{R}$ and a $\gamma \in (0,1)$ such that
\begin{equation}
\label{eq:VgammaBound}
V(f(q,u,y)) - V(q) \leq \gamma \rho(u) - d(q)
\end{equation}
holds for all $q \in \mathcal{Q}$, $u \in \mathcal{U}$ and $y \in \mathcal{Y}$, where $d: \mathcal{Q} \rightarrow \{0,1\}$ 
defined by: 
\begin{displaymath}
d(q) = \left\{ \begin{array}{ll}
0 & \textrm{     if      } q=\mathcal{P}_{1}(q) \textrm{      or     } q = \mathcal{P}_{-1}(q) \\
1 & \textrm{     otherwise}
\end{array} \right. 
\end{displaymath}
then the error system $\Delta$ with $\hat{M}$ is initialized to $q_o$ satisfies (\ref{eq:gainofDelta}) for that choice of $\gamma$.
\end{lemma}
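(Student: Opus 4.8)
The plan is to use the function $V$ supplied by the hypothesis as a storage/Lyapunov-type certificate and to telescope the inequality (\ref{eq:VgammaBound}) along trajectories of $\Delta$. The key observation linking $\Delta$ to $\hat M$ is that, by construction of the error system (Figure \ref{fig:DeltaSystem}), the state $q(t)$ of the embedded copy of $\hat M$ inside $\Delta$ evolves exactly according to $q(t+1) = f\big(q(t),u(t),\tilde y(t)\big)$, with $\tilde y$ obtained from the plant output $y$ via the feedback through $\Psi$; so (\ref{eq:VgammaBound}) applies verbatim at each step with the actual signals $(u(t),\tilde y(t))$ in place of the dummy $(u,y)$. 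The first real step is therefore to check that the indicator $d(q(t))$ dominates $\mu(w(t)) = w(t)$ along every feasible trajectory, i.e. that $w(t) = 1$ forces $d(q(t)) = 1$. This is where the no-feedthrough requirement on $\hat M$ and the definition of $\hat g$ do the work: if $d(q(t)) = 0$ then $q(t) = \mathcal P_1(q(t))$ or $q(t) = \mathcal P_{-1}(q(t))$, meaning the potential state lies entirely in one half-circle on which $c'\beta(\theta)$ has a single sign; combined with $\theta(t) \in q(t)$ (which holds by Lemma \ref{lemma:MhatProperty}'s second bullet, since $\hat M$ is initialized to $q_o$), this forces $\hat y(t) = g(q(t))$ to equal the true $y(t) = \mathrm{sign}(c'\beta(\theta(t)))$, hence $w(t) = 0$. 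Thus $\mu(w(t)) \le d(q(t))$ for all $t$.

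Given that, substitute into (\ref{eq:VgammaBound}) to get, for every feasible $(\mathbf u,\mathbf w)$ of $\Delta$ and every $t$,
\begin{displaymath}
V(q(t+1)) - V(q(t)) \;\le\; \gamma \rho(u(t)) - d(q(t)) \;\le\; \gamma \rho(u(t)) - \mu(w(t)).
\end{displaymath}
Summing from $t = 0$ to $T$ telescopes the left-hand side to $V(q(T+1)) - V(q(0))$, giving
\begin{displaymath}
\sum_{t=0}^{T} \big(\gamma \rho(u(t)) - \mu(w(t))\big) \;\ge\; V(q(T+1)) - V(q(0)) \;\ge\; \min_{q \in \mathcal Q} V(q) - V(q_o).
\end{displaymath}
Since $\mathcal Q$ is finite, the right-hand side is a finite constant independent of $T$ and of the particular feasible trajectory, so taking the infimum over $T \ge 0$ stays bounded below, which is precisely (\ref{eq:gainofDelta}) for this $\gamma$.

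The routine parts are the telescoping and the finiteness bound; the one step that needs care — and the place I expect the real content to sit — is the claim that $d(q(t)) \ge w(t)$ along feasible trajectories. It relies on two facts that must be invoked cleanly: that $\hat M$ starting from $q_o$ keeps $\theta(t) \in q(t)$ for all $t$ (Lemma \ref{lemma:MhatProperty}), and that when a potential state is contained in a single half-circle $[\alpha_1,\alpha_{n+1})$ or $[\alpha_{n+1},\alpha_{2n+1})$ the output map $g$ returns the unambiguous sign of $c'\beta$ on that arc, matching the plant's $y(t)$. One should also note that feasibility of $(\mathbf u,\mathbf w)$ for $\Delta$ means exactly that there is some plant initial condition realizing these signals, which is what licenses the use of $\theta(t) \in q(t)$; no independent constraint on $\mathbf w$ beyond this is needed.
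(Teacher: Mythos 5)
Your proposal is correct and follows essentially the same route as the paper's proof: telescoping the storage-function inequality over the finite state set $\mathcal{Q}$, and invoking Lemma \ref{lemma:MhatProperty} to conclude $\theta(t)\in q(t)$ and hence $w(t)\le d(q(t))$ whenever $q(t)=\mathcal{P}_1(q(t))$ or $q(t)=\mathcal{P}_{-1}(q(t))$. The only cosmetic difference is the order of the two steps (you establish $\mu(w(t))\le d(q(t))$ before summing, the paper after), which changes nothing.
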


\begin{proof} By summing up (\ref{eq:VgammaBound}) along any state trajectory of $\hat{M}$ from $t=0$ to $t=T$, we get:
\begin{eqnarray} \nonumber
\sum_{t=0}^{T} \gamma \rho(u(t)) - d(q(t)) & \geq & V(q(T)) - V(q(0))\\ \nonumber
& \geq & \min_{q_1,q_2} \Big( V(q_1)-V(q_2) \Big)
\end{eqnarray}
Hence, we have:
\begin{displaymath}
\inf_{T \geq 0} \sum_{t=0}^T \gamma \rho(u(t)) - d(q(t)) \geq \min_{q_1,q_2} \Big( V(q_1)-V(q_2) \Big) > -\infty
\end{displaymath}
It follows from Lemma \ref{lemma:MhatProperty} that when $\hat{M}$ is initialized to $q(0)=q_o$, we have $\theta(t) \in q(t)$ 
for all $t$, where $\theta$ and $q$ are the states of $P$ and $\hat{M}$, respectively. 
Thus, when $q(t) = \mathcal{P}_1(q(t))$ or $q(t) = \mathcal{P}_{-1}(q(t))$,  $y(t)=\hat{y}(t)$ and $w(t)=0$.
Otherwise $w(t) \in \{0,1\}$. 
Hence $w(t) \leq d(q(t))$ for all $t$ and all feasible input/output signal pairs of $\Delta$ satisfy (\ref{eq:gainofDelta}).
\end{proof}

An upper bound for the gain of $\Delta$ can thus be computed by solving a linear program in which we minimize $\gamma$ such that 
(\ref{eq:VgammaBound}) holds for all $q \in \mathcal{Q}$, $u \in \mathcal{U}$ and $y \in \mathcal{Y}$. 
This linear program has $N+1$ decision variables and $4N$ inequality constraints, where $N=card(\mathcal{Q})$.

\begin{remark}
Recall that the approximation error $\Delta$ is a \textit{complex} system with both continuous and discrete states. 
Thus the appeal of this approach to computing a gain bound for $\Delta$ is its simplicity and its computational efficiency. 
The downside of this approach is that it results in conservative gain bounds, for two reasons:
\begin{enumerate}
\item It inherently assumes that an error occurs every single time it can.
\item It assumes that all signal pairs $(\mathbf{u},\mathbf{y}) \in \mathcal{U}^{\mathbb{Z}_+} \times \mathcal{Y}^{\mathbb{Z}_+}$ 
are valid input sequences for $\hat{M}$, which is not the case since $\mathbf{y}$ is an output of $P$ corresponding to $\mathbf{u}$.
\end{enumerate}
Thus, in using this approach, we are in effect trading off simplicity and efficiency versus conservatism.
\end{remark}

%%%%%%%%%%%%%%%%%%%%%%%%%%%%%%%%%%%%%%%%%%%%%%%%%%%%%%%%%%%%%%%%%%%%%%%%%%%
%%%%%%%%%%%%%%%%%%%%%%%%%%%%%%%%%%%%%%%%%%%%%%%%%%%%%%%%%%%%%%%%%%%%%%%%%%%
\section{Controller Design}
\label{sec:Design}

%%%%%%%%%%%%%%%%%%%%%%%%%
% The Two Synthesis Problems 
\subsection{Two Related Synthesis Problems}
\label{sec:SynthesisProblems}

   \begin{figure}[thpb]
      \begin{center}
      \includegraphics[scale=0.5]{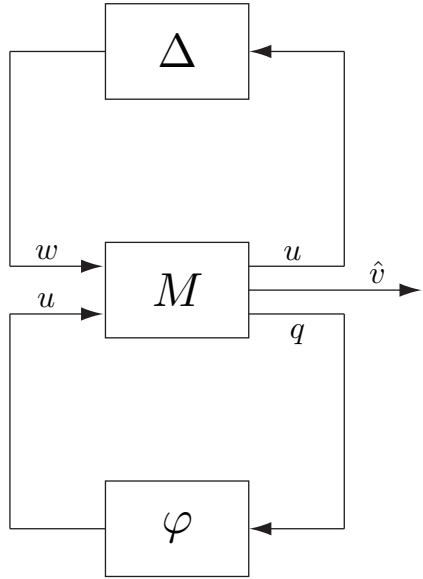}
      \caption{Robust full state feedback control design setup}
      \label{fig:RobustSetup}
      \end{center}
   \end{figure}

Consider the following two controller synthesis problems:

\begin{itemize}
\item[] \underline{\textbf{Problem 1}:} Given a plant $P$ as in (\ref{eq:PlantState}), (\ref{eq:PlantSensorOutput}) and (\ref{eq:PlantPerformanceOutput}), 
design a controller $K \subset \mathcal{Y}^{\mathbb{Z}_+} \times \mathcal{U}^{\mathbb{Z}_+}$ such that the feedback interconnection $(P,K)$ 
shown in Figure \ref{fig:DesignSetup} satisfies the performance objective
$$
\sup_{T \geq 0} \sum_{t=0}^{T} \Big( v(t) + R \Big) < \infty 
\reqno{eq:PerfObjective}
$$
for some $R>0$, for any initial condition of $P$.

\item[] \underline{\textbf{Problem 2}:} Given a plant $P$ as in (\ref{eq:PlantState}), (\ref{eq:PlantSensorOutput}) and (\ref{eq:PlantPerformanceOutput}) 
and a finite state approximation $M$ of $P$ constructed as described in Section \ref{ssec:Mconstruction} with 
the corresponding error system satisfying gain condition (\ref{eq:gainofDelta}) for some $\gamma \in (0,1)$. 
Design a full state feedback control law $\varphi: \mathcal{Q} \rightarrow \mathcal{U}$ such that the interconnection 
$(M,\Delta,\varphi)$ shown in Figure \ref{fig:RobustSetup} satisfies the auxiliary robust performance objective
$$
\sup_{T \geq 0} \sum_{t=0}^{T} \Big( \hat{v}(t) + R \Big) < \infty
\reqno{eq:AuxPerfObject}
$$
for some $R>0$, for all systems $\Delta$ satisfying gain condition (\ref{eq:gainofDelta}).

\end{itemize}

Problem 1 is the original problem of interest. 
Satisfaction of performance objective (\ref{eq:PerfObjective}) guarantees that the state of the closed loop system globally exponentially converges to the origin, 
on average, at a rate not less than $R$.
To see that, note that this performance objective can be equivalently re-written as
\begin{displaymath}
\| x(t) \| \leq k(x(0)) 10^{-Rt} \|x(0)\|, \textrm{       } \forall t \geq 0, x(0) \in \mathbb{R}^2
\end{displaymath}
where $\displaystyle k(x(0)) = 10^{S(x(0)) - R}$ with
\begin{displaymath}
S(x(0) = \sup_{T>0} \sum_{t=0}^{T} (v(t) + R)
\end{displaymath}
when the plant is initialized to $x(0)$.
The largest value of $R$ for which (\ref{eq:AuxPerfObject}) holds is then the guaranteed rate of 
exponential\footnote{The notion of exponential convergence considered here is slightly different than that of global exponential convergence defined in standard textbooks 
such as \cite{BOOK:Vidyasagar2002,BOOK:Khalil2002}; in particular, we are not requiring the term $k(x(0))$ to be uniformly bounded.} convergence; 
the actual rate of convergence may be (significantly) better.

Problem 2 is of interest as its solution provides a solution for Problem 1. 
In particular, let $\varphi$ be a full state feedback controller such that $(M,\Delta,\varphi)$ satisfies (\ref{eq:AuxPerfObject}) for some $R > 0$, for all admissible $\Delta$. 
Recall that by construction, the outputs $\hat{v}$ of $(M,\Delta)$ and $v$ of $P$ satisfy (\ref{eq:vBound})
whenever the same input $\mathbf{u} \in \mathcal{U}^{\mathbb{Z}_+}$ drives the plant $P$ and the interconnection $(M,\Delta)$. 
Thus, to ensure that (\ref{eq:PerfObjective}) holds for interconnection $(P,K)$ whenever (\ref{eq:AuxPerfObject}) 
holds, it is sufficient to ensure that the controller $K$ connected in feedback with plant $P$ is identical to the subsystem with input $y$ 
and output $u$ in the interconnection $(M,\Delta,\varphi)$. The structure of the resulting controller $K$ is shown in Figure \ref{fig:Implementation}: 
$K$ thus consists of $\hat{M}$, a deterministic finite state machine "observer" for the plant and $\varphi$, a corresponding full state feedback control law.

 \begin{figure}[thpb]
      \begin{center}
      \includegraphics[scale=0.5]{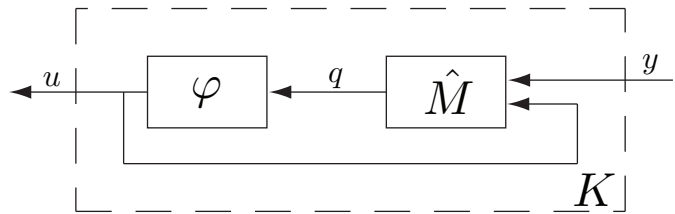}
      \caption{The finite state stabilizing controller $K$}
      \label{fig:Implementation}
      \end{center}
   \end{figure}

%%%%%%%%%%%%%%%%%%%%%%%%%
% Design of the Switching Law
\subsection{Design of a Full State Feedback Robust Switching Law}
\label{sec:ControllerDesign}

The following problem is addressed in this section: 
Given a deterministic finite state machine 
$M \subset (\mathcal{W}^{\mathbb{Z}_+} \times \mathcal{U}^{\mathbb{Z}_+}) \times (\mathcal{U}^{\mathbb{Z}_+} \times \mathcal{V}^{\mathbb{Z}_+} \times \mathcal{Y}^{\mathbb{Z}_+})$ 
with state set $\mathcal{Q}$, a scalar $\gamma_o \in (0,1)$, and an uncertainty class 
\begin{displaymath}
\mathbf{\Delta}_{\gamma_o} = \Big\{ \Delta \subset \mathcal{U}^{\mathbb{Z}_+} \times \mathcal{W}^{\mathbb{Z}_+} \Big| \Delta \textrm{   satisfies (\ref{eq:gainofDelta}) for    } \gamma=\gamma_o \Big\}.
\end{displaymath}
Design a robust switching law $\varphi: \mathcal{Q} \rightarrow \mathcal{U}$ such that the closed loop system $(M,\Delta,\varphi)$ shown in Figure \ref{fig:RobustSetup}
satisfies the auxiliary robust performance objective
$$
\sup_{T \geq 0} \sum_{t=0}^{T} \Big( \hat{v}(t) + R  \Big) < \infty
\reqno{eq:AuxPerfObject}
$$
for some $R >0$, for all admissible uncertainty $\Delta \in \mathbf{\Delta}_{\gamma_o}$. 

The robust switching law $\varphi$ will be designed using dynamic programming techniques and a ``small gain'' argument. 
Consider the feedback interconnection of two systems $S$ and $\Delta$ as in Figure \ref{fig:SGT}. The 'Small Gain Theorem' (Theorem \ref{thm:SGT}) and Corollary \ref{cor:Scales}, 
presented here without proof, are directly adapted from Theorem 1 and Remark 4, respectively, in \cite{JOUR:TMD}. 

\begin{theorem}
\label{thm:SGT}
\textit{(A 'Small Gain' Theorem)}  Suppose that $S$ satisfies
\begin{equation}
\label{eq:SGT1}
\inf_{T \geq 0} \sum_{t=0}^{T} \rho_{S}(r(t),w(t)) - \mu_{S}(\hat{v}(t),u(t)) > -\infty
\end{equation}
for some $\rho_{S} : \mathcal{R} \times \mathcal{W} \rightarrow \mathbb{R}$ and $\mu_{S} : \hat{\mathcal{V}} \times \mathcal{U} \rightarrow \mathbb{R}$, 
where $\mathcal{R}$, $\mathcal{W}$, $\hat{\mathcal{V}}$ and $\mathcal{U}$ are finite sets, and that $\Delta$ satisfies
\begin{equation}
\label{eq:SGT2}
\inf_{T \geq 0} \sum_{t=0}^{T} \rho_{\Delta}(u(t)) - \mu_{\Delta}(w(t)) > -\infty
\end{equation}
for some $\rho_{\Delta}: \mathcal{U} \rightarrow \mathbb{R}$ and $\mu_{\Delta} : \mathcal{W} \rightarrow \mathbb{R}$. Then the 
interconnected system $(S,\Delta)$ with input $r$ and output $\hat{v}$ satisfies
\begin{equation}
\label{eq:SGT3}
\inf_{T \geq 0} \sum_{t=0}^{T} \rho(r(t)) - \mu(\hat{v}(t)) > -\infty
\end{equation}
for $\rho: \mathcal{R} \rightarrow \mathbb{R}$ and $\mu: \hat{\mathcal{V}} \rightarrow \mathbb{R}$ defined by
\begin{displaymath}
\label{eq:SGT4}
\rho(r) = \max_{w \in \mathcal{W}} \{ \rho_{S}(r,w) - \mu_{\Delta}(w) \}
\end{displaymath}
\begin{displaymath}
\label{eq:SGT5}
\mu(\hat{v}) = \min_{u \in \mathcal{U}} \{ \mu_{S}(\hat{v},u) - \rho_{\Delta}(u) \}
\end{displaymath}
\end{theorem}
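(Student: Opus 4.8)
The plan is to prove the theorem by the discrete‑alphabet analogue of the classical ``addition of supply rates'' proof of the small gain theorem. Fix any feasible trajectory of the interconnected system $(S,\Delta)$ with external input $r$ and external output $\hat v$; along this trajectory all four signals $r,w,\hat v,u$ are determined and take values in the respective finite alphabets. The first step is a well‑posedness observation about the feedback interconnection of Figure~\ref{fig:SGT}: the signals at the ports of $S$, namely $(r,w,\hat v,u)$, form a feasible trajectory of $S$, and the pair $(u,w)$ is a feasible input/output pair of $\Delta$. Consequently hypothesis (\ref{eq:SGT1}) applies to this trajectory and yields a constant $c_S\geq 0$ with $\sum_{t=0}^{T}\big(\rho_S(r(t),w(t))-\mu_S(\hat v(t),u(t))\big)\geq -c_S$ for all $T\geq 0$, and hypothesis (\ref{eq:SGT2}) yields a constant $c_\Delta\geq 0$ with $\sum_{t=0}^{T}\big(\rho_\Delta(u(t))-\mu_\Delta(w(t))\big)\geq -c_\Delta$ for all $T\geq 0$.

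The second step is to add these two inequalities and regroup each summand according to the port it involves: for every $T\geq 0$,
\begin{displaymath}
\sum_{t=0}^{T}\Big[\big(\rho_S(r(t),w(t))-\mu_\Delta(w(t))\big)-\big(\mu_S(\hat v(t),u(t))-\rho_\Delta(u(t))\big)\Big]\geq -c_S-c_\Delta .
\end{displaymath}
The third step is the pointwise comparison that motivates the particular definitions of $\rho$ and $\mu$: since $w(t)\in\mathcal{W}$ we have $\rho_S(r(t),w(t))-\mu_\Delta(w(t))\leq\rho(r(t))$, and since $u(t)\in\mathcal{U}$ we have $\mu_S(\hat v(t),u(t))-\rho_\Delta(u(t))\geq\mu(\hat v(t))$, so each summand above is bounded above by $\rho(r(t))-\mu(\hat v(t))$. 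Replacing the summand by this larger quantity only enlarges the left‑hand side, hence $\sum_{t=0}^{T}\big(\rho(r(t))-\mu(\hat v(t))\big)\geq -c_S-c_\Delta$ for all $T\geq 0$, which gives $\inf_{T\geq 0}\sum_{t=0}^{T}\big(\rho(r(t))-\mu(\hat v(t))\big)\geq -(c_S+c_\Delta)>-\infty$, i.e.\ (\ref{eq:SGT3}).

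I do not expect a genuine obstacle: the argument is short and self‑contained once the interconnection is written out. The one place that needs care is the sign bookkeeping in the third step — $\rho$ must be a \emph{maximum} over $w$ so that substituting $\rho(r)$ for $\rho_S(r,\cdot)-\mu_\Delta(\cdot)$ increases the partial sum, while $\mu$ must be a \emph{minimum} over $u$ so that substituting $-\mu(\hat v)$ for $-(\mu_S(\hat v,\cdot)-\rho_\Delta(\cdot))$ also increases it; one must check that both substitutions move the left‑hand side in the same direction so the lower bound $-c_S-c_\Delta$ survives. A secondary point worth one sentence is whether the constants $c_S,c_\Delta$ in (\ref{eq:SGT1})--(\ref{eq:SGT2}) are understood uniformly over all trajectories or per trajectory; since the derivation is carried out trajectory by trajectory, the conclusion (\ref{eq:SGT3}) follows with whichever uniformity the hypotheses are stated in.
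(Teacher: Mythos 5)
Your proof is correct. The paper itself states Theorem~\ref{thm:SGT} without proof (deferring to Theorem~1 of \cite{JOUR:TMD}), and your argument --- adding the two supply-rate inequalities (\ref{eq:SGT1}) and (\ref{eq:SGT2}) along a common feasible trajectory, regrouping by port, and bounding each summand by $\rho(r(t))-\mu(\hat v(t))$ via the $\max$ over $w$ and $\min$ over $u$ in the definitions of $\rho$ and $\mu$ --- is the standard dissipation argument behind such results and is complete; the sign bookkeeping you flag is handled correctly.
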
 

   \begin{figure}[thpb]
      \begin{center}
      \includegraphics[scale=0.5]{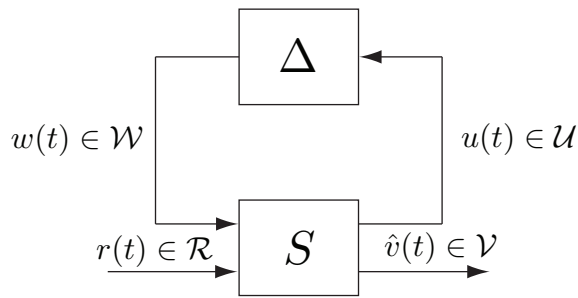}
      \caption{Setup for the 'Small Gain' Theorem.}
      \label{fig:SGT}
      \end{center}
   \end{figure}

\begin{corollary}
\label{cor:Scales}
The interconnected system satisfies (\ref{eq:SGT3}) for $\rho : \mathcal{R} \rightarrow \mathbb{R}$ and $\mu: \hat{\mathcal{V}} \rightarrow \mathbb{R}$ 
defined by
\begin{displaymath}
\rho(r) = \max_{w \in \mathcal{W}} \{ \rho_{S}(r,w) - \tau \mu_{\Delta}(w) \}
\end{displaymath}
\begin{displaymath}
\mu(\hat{v}) = \min_{u \in \mathcal{U}} \{ \mu_{S}(\hat{v},u) - \tau \rho_{\Delta}(u) \}
\end{displaymath}
for any scalar parameter $\tau > 0 $.
\end{corollary}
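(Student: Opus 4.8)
The plan is to derive Corollary~\ref{cor:Scales} as an immediate consequence of Theorem~\ref{thm:SGT}, applied to a rescaled supply rate for $\Delta$. The key observation is that the inequality characterizing $\Delta$ in (\ref{eq:SGT2}) is invariant, in the relevant sense, under multiplication by a positive scalar: if
\[
\inf_{T \geq 0} \sum_{t=0}^{T} \rho_{\Delta}(u(t)) - \mu_{\Delta}(w(t)) = c > -\infty,
\]
then every partial sum is bounded below by $c$, so for any $\tau > 0$ every partial sum of $\tau\rho_{\Delta}(u(t)) - \tau\mu_{\Delta}(w(t))$ is bounded below by $\tau c$, whence
\[
\inf_{T \geq 0} \sum_{t=0}^{T} \tau\rho_{\Delta}(u(t)) - \tau\mu_{\Delta}(w(t)) \geq \tau c > -\infty .
\]
Thus, whenever $\Delta$ satisfies (\ref{eq:SGT2}) with $(\rho_{\Delta},\mu_{\Delta})$, it also satisfies (\ref{eq:SGT2}) with $\rho_{\Delta}$ replaced by $\tau\rho_{\Delta}$ and $\mu_{\Delta}$ replaced by $\tau\mu_{\Delta}$.

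First I would fix $\tau > 0$ and invoke Theorem~\ref{thm:SGT} verbatim, but with the pair $(\tau\rho_{\Delta}, \tau\mu_{\Delta})$ in place of $(\rho_{\Delta}, \mu_{\Delta})$: the hypothesis (\ref{eq:SGT1}) on $S$ is untouched, and the hypothesis on $\Delta$ holds by the observation above. The theorem then yields (\ref{eq:SGT3}) for $\rho$ and $\mu$ obtained from the formulas in Theorem~\ref{thm:SGT} with $\mu_{\Delta}$ and $\rho_{\Delta}$ scaled by $\tau$, that is,
\[
\rho(r) = \max_{w \in \mathcal{W}} \{ \rho_{S}(r,w) - \tau\mu_{\Delta}(w) \}, \qquad \mu(\hat{v}) = \min_{u \in \mathcal{U}} \{ \mu_{S}(\hat{v},u) - \tau\rho_{\Delta}(u) \},
\]
which is exactly the claim of the corollary. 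Since $\tau > 0$ is arbitrary, this holds for every such $\tau$.

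There is essentially no obstacle here: the content is entirely in Theorem~\ref{thm:SGT}, and the corollary only exploits the positive homogeneity of the ``finite-$\inf$'' dissipation-type condition for $\Delta$. The one point worth stating carefully is precisely that rescaling by $\tau > 0$ \emph{preserves}, rather than merely weakens, the lower bound --- which it does because the bound applies uniformly to all partial sums, not just in the limit. No finiteness or rationality assumptions on $\tau$ beyond $\tau > 0$ are needed.
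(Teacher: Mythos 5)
Your proof is correct and is exactly the standard argument: the paper states Corollary~\ref{cor:Scales} without proof (citing it as adapted from a remark in \cite{JOUR:TMD}), and the intended justification is precisely the positive-homogeneity observation you make --- that $\Delta$ satisfying (\ref{eq:SGT2}) with $(\rho_{\Delta},\mu_{\Delta})$ also satisfies it with $(\tau\rho_{\Delta},\tau\mu_{\Delta})$ for any $\tau>0$, after which Theorem~\ref{thm:SGT} applies verbatim. No gaps.
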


It follows from Theorem \ref{thm:SGT} and Corollary \ref{cor:Scales} that the objective in (\ref{eq:AuxPerfObject}) can be achieved by designing a switching law 
$\varphi:\mathcal{Q} \rightarrow \mathcal{U}$ such that the feedback interconnection $S=(M,\varphi)$ satisfies (\ref{eq:SGT1}) with
\begin{eqnarray}
\rho_{S}(r,w) & = & \tau \mu(w) -R, \\ \nonumber
\mu_{S}(\hat{v},u) & = & \hat{v} + \tau \gamma_o \rho(u) \nonumber
\end{eqnarray}
for some $R > 0$ and $\tau >0$, for $\rho$ and $\mu$ defined as in (\ref{eq:gainofDelta}).
Note that the exogenous input $r$ can be assumed to be constant here (representing the desired rate of convergence).

With the auxiliary robust performance objective reformulated as a design objective for system $S = (M,\varphi)$, 
the design of the full state feedback switching law can be carried out by solving a min-max optimization problem using 
standard dynamic programming techniques \cite{BOOK:B1,BOOK:B2}. 
The following theorem precisely formulates this approach: In this setting, function $J$ and operator $\mathbb{T}$ 
play the role of the ``cost-to-go" function and the ``dynamic programming" operator, respectively.
Value iteration is used to solve for the cost-to-go function, 
and the sought full state feedback switching law is then simply the optimizing argument.  
For the sake of completeness, a proof is outlined in the Appendix.

\begin{theorem}
\label{thm:DP}
Consider a deterministic finite state machine $M$ with state transition equation
\begin{displaymath}
q(t+1)=f(q(t),u(t),w(t))
\end{displaymath} 
where $\mathcal{Q}$ is the state set and $\mathcal{U}$ and $\mathcal{W}$ are the input sets.
Let $\sigma : \mathcal{Q} \times \mathcal{U} \times \mathcal{W} \rightarrow \mathbb{R}$ be a given function.
The following three statements are equivalent:
\begin{itemize}
\item[(a)] There exists a $\varphi:\mathcal{Q} \rightarrow \mathcal{U}$ such that the closed loop system $(M,\varphi)$ satisfies
\begin{equation}
\label{eq:SigmaObjective}
\inf_{T \geq 0} \sum_{t=0}^{T} \sigma(q(t),\varphi(q(t)),w(t)) > -\infty.
\end{equation}
\item[(b)] There exists a function $J:\mathcal{Q} \rightarrow \mathbb{R}_+$ such that the inequality  
\begin{equation}
\label{eq:BellmanInequality}
J(q) \geq \mathbb{T}(J(q))
\end{equation}
holds for any $q \in \mathcal{Q}$, for $\mathbb{T}: \mathbb{R}^{\mathcal{Q}} \rightarrow \mathbb{R}^{\mathcal{Q}}$ defined by
\begin{equation}
\label{eq:Toperator}
\mathbb{T}(J(q)) = \min_{u \in \mathcal{U}} \max_{w \in \mathcal{W}} \{-\sigma(q,u,w) + J(f(q,u,w)) \}.
\end{equation}
\item[(c)] The sequence of functions $J_k : \mathcal{Q} \rightarrow \mathbb{R}$, $k \in \mathbb{Z}_+$, defined recursively by
\begin{eqnarray}
\label{eq:Jsequence}
\nonumber
J_0 & = & 0 \\ 
J_{k+1} & = & \max \{0,\mathbb{T}(J_k)\}
\end{eqnarray}
converges. 
\end{itemize}
\end{theorem}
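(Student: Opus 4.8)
I would first record two elementary facts about the dynamic programming operator. First, $\mathbb{T}$ is \emph{monotone}: if $J\leq J'$ pointwise on $\mathcal{Q}$ then, since $f$ is fixed and the $\min$ over $\mathcal{U}$ and $\max$ over $\mathcal{W}$ (both finite) preserve pointwise order, $\mathbb{T}(J(q))\leq\mathbb{T}(J'(q))$ for every $q$. Second, $\mathbb{T}$ is \emph{continuous} on $\mathbb{R}^{\mathcal{Q}}$, being a finite minimum of finite maxima of the affine maps $J\mapsto -\sigma(q,u,w)+J(f(q,u,w))$. Consequently the operator $\Phi:=\max\{0,\mathbb{T}(\cdot)\}$ is monotone and continuous, and the sequence of (c) is nondecreasing: $J_1=\Phi(0)\geq 0=J_0$, and $J_k\geq J_{k-1}$ forces $J_{k+1}=\Phi(J_k)\geq\Phi(J_{k-1})=J_k$. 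I would then prove $(b)\Rightarrow(a)$, $(a)\Rightarrow(b)$, $(b)\Rightarrow(c)$ and $(c)\Rightarrow(b)$. For $(b)\Rightarrow(a)$: given $J:\mathcal{Q}\to\mathbb{R}_+$ with $J(q)\geq\mathbb{T}(J(q))$ for all $q$, pick $\varphi(q)$ to attain the minimum defining $\mathbb{T}(J(q))$; then for every $q$ and $w$ one gets $-\sigma(q,\varphi(q),w)+J(f(q,\varphi(q),w))\leq J(q)$, i.e. $\sigma(q,\varphi(q),w)\geq J(f(q,\varphi(q),w))-J(q)$, and summing this telescoping inequality along any closed-loop trajectory gives $\sum_{t=0}^{T}\sigma(q(t),\varphi(q(t)),w(t))\geq J(q(T+1))-J(q(0))\geq -\max_{q\in\mathcal{Q}}J(q)$ for all $T$, which is (a).

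The crux is $(a)\Rightarrow(b)$. Fixing a switching law $\varphi$ that witnesses (a), I would take as candidate value function
$$J(q)=\max\Big\{0,\ \sup_{T\geq 0,\ \mathbf{w}}\Big(-\sum_{t=0}^{T}\sigma(q(t),\varphi(q(t)),w(t))\Big)\Big\},$$
where the inner supremum runs over $\mathbf{w}\in\mathcal{W}^{\mathbb{Z}_+}$ and the closed-loop trajectories they generate with $q(0)=q$. The only nontrivial point --- and the one place the \emph{finiteness} of $\mathcal{Q}$ is genuinely used --- is that this $J$ is real-valued. Indeed, (a) rules out any closed-loop cycle $q\to\cdots\to q$ with negative $\sigma$-sum, since looping on it forever would send $\sum_{t=0}^{T}\sigma$ to $-\infty$; and once every cycle has nonnegative $\sigma$-sum, an arbitrary finite trajectory can be reduced to a simple one by excising cycles without decreasing $-\sum\sigma$, so $J(q)\leq\max\{0,(N-1)\max_{q,u,w}(-\sigma(q,u,w))\}<\infty$ with $N=card(\mathcal{Q})$. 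Given real-valuedness, the Bellman inequality drops out of a one-step expansion: for each $w_0$, prepending the edge $q\to f(q,\varphi(q),w_0)$ to trajectories out of $f(q,\varphi(q),w_0)$ that are near-optimal for $J(f(q,\varphi(q),w_0))$ shows $-\sigma(q,\varphi(q),w_0)+J(f(q,\varphi(q),w_0))\leq J(q)$; maximizing over $w_0$ and noting that $\varphi(q)$ is one admissible choice of $u$ yields $\mathbb{T}(J(q))\leq J(q)$, while $J(q)\geq 0$ by construction, so $J$ witnesses (b).

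Finally, $(b)\Rightarrow(c)$: if $J$ witnesses (b) then $J\geq\max\{0,\mathbb{T}(J)\}=\Phi(J)$, so by monotonicity $J_0=0\leq J$ propagates to $J_k\leq J$ for all $k$, and a nondecreasing sequence bounded above at each of the finitely many points of $\mathcal{Q}$ converges; and $(c)\Rightarrow(b)$: the (finite) limit $J_\infty$ satisfies $J_\infty=\Phi(J_\infty)$ by continuity of $\Phi$, hence $J_\infty\geq 0$ and $J_\infty\geq\mathbb{T}(J_\infty)$, so $J_\infty$ witnesses (b). I expect the main obstacle to be exactly the real-valuedness of the value function in $(a)\Rightarrow(b)$: the hypothesis of (a) is stated trajectory by trajectory, and only finiteness of the state set --- through the ``no positive cycle'' observation --- upgrades it to the bound uniform in $\mathbf{w}$ that is needed for $J$ to map into $\mathbb{R}_+$; the rest is routine telescoping, monotonicity, and monotone convergence.
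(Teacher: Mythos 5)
Your proof is correct, and it takes a genuinely different route from the paper's. The paper proves the cycle of implications (b)$\Rightarrow$(a)$\Rightarrow$(c)$\Rightarrow$(b), and it outsources the hard step --- passing from the trajectory-wise boundedness in (a) to the existence of a nonnegative storage function --- to a cited result (Theorem 3 of \cite{JOUR:TMD}, restated as Theorem \ref{thm:Vstability}) that it does not prove; its (c)$\Rightarrow$(b) step then uses the translation identity $\mathbb{T}(J+c)=\mathbb{T}(J)+c$ together with monotonicity in place of your appeal to continuity of $\Phi=\max\{0,\mathbb{T}(\cdot)\}$ (the two devices are interchangeable here, since the translation lemma is exactly the nonexpansiveness that makes $\mathbb{T}$ continuous in the sup norm). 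You instead prove (a)$\Rightarrow$(b) directly and self-containedly by constructing the worst-case value function $J(q)=\max\{0,\sup_{T,\mathbf{w}}(-\sum_{t=0}^{T}\sigma)\}$ and establishing its finiteness through the no-negative-cycle/cycle-excision argument; this in effect reproves the substantive content of the cited storage-function theorem, and you correctly isolate the one place where finiteness of $\mathcal{Q}$ does real work --- upgrading the per-trajectory bound of (a) to a bound uniform over $\mathbf{w}$ and $T$. What the paper's route buys is brevity, by leaning on an existing dissipativity result; what yours buys is a fully self-contained proof and an explicit, interpretable witness for (b). Both your (b)$\Rightarrow$(a) telescoping and your monotone-convergence argument for (b)$\Rightarrow$(c) coincide in substance with the paper's. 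No gaps.
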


For the particular control problem of interest, the cost function 
\begin{displaymath}
\sigma(q,u,w) = \tau \mu(w) - R - \hat{v}(q,u) - \tau \gamma_o \rho(u)
\end{displaymath}
involves two parameters, the convergence rate $R$ and the small gain scaling parameter $\tau$. 
Ultimately, the goal is to maximize $R>0$ for which there exists a $J: \mathcal{Q} \rightarrow \mathbb{R}$ and 
a $\tau > 0$ such that (\ref{eq:BellmanInequality}) holds. 
However, since it is not possible to directly compute the optimal value of $R$, a numerical search is carried out yielding a suboptimal value of $R$: 
First, a range of values of $\tau$ for which the design objective can be met when $R=0$ is computed. 
Different values of $\tau$ are then sampled in this range to compute the largest feasible corresponding value of $R$, 
with the largest of those being the (suboptimal) guaranteed rate of convergence.

%%%%%%%%%%%%%%%%%%%%%%%%%%%%%%%%%%%%%%%%%%%%%%%%%%%%%%%%%%%%%%%%%%%%%%%%%%%%%%%
%%% Section presenting numerical examples
\section{Illustrative Examples}
\label{sec:Examples}

An academic example is presented in this section to illustrate the proposed design procedure. 
The example was chosen for its simplicity and amenability to analysis in the ideal (full state feedback) case, 
thus allowing us to numerically compare the performance of our finite state controllers in the binary sensing setting 
to the best achievable performance in the ideal setting for a given pair of plants.

Consider a harmonic oscillator
\begin{eqnarray} \nonumber
\dot{x}_1 & = & x_2 \\ \nonumber
\dot{x}_2 & = & k x_1
\end{eqnarray}
described in polar coordinates by
\begin{eqnarray} \nonumber
\dot{r} & = & (1+k)r\sin(\theta) \cos(\theta) \\ \nonumber
\dot{\theta} & = & (1+k)\cos^2(\theta) - 1
\end{eqnarray}
When $k=-1$, $\dot{r}=0$ and the state trajectories are concentric circles centered at the origin. 
For any $k \neq -1$, $\dot{r} <0$ in exactly two sectors of the state space given by
\begin{displaymath} 
\theta \in \left\{ \begin{array}{cc}
(0,\pi/2) \cup (\pi,3 \pi/2) & \textrm{      when   } k <-1 \\
(\pi/2,\pi) \cup (3 \pi/2, 2\pi) & \textrm{      when   } k > -1
\end{array} \right.
\end{displaymath}
Thus in the ideal case where the switching controller has full access to the state and switching can occur at any time, 
it is always possible to stabilize a pair of harmonic oscillators with $k=-1$ and $k \in \mathbb{R}\setminus \{-1\}$ by appropriately switching between them.

Now suppose that the system is sampled and the only sensor information available for use by the controller is the sign of the position measurement. 
The stabilization problem becomes much more difficult because of the non-trivial state estimation problem that arises due to binary sensing and 
the time delays introduced by sampling. 
The approach described in this paper will be used to design a finite state stabilizing controller. 
 
The dynamics of the system, sampled at times $tT_S$, $t \in \mathbb{Z}_+$, for a given sampling period $T_S$, are
\begin{eqnarray}
\label{eq:systemP}
\nonumber
x(t+1) & = & A_S(u(t)) x(t) \\  \nonumber
y(t) & = & sgn(x_1(t))\\ \nonumber
v(t) & = & log \Big( \frac{\| x(t+1) \|_{2}}{\| x(t) \|_{2}} \Big) 
\end{eqnarray}
where $A_S(0)$, $A_S(1)$ are given by $A_S(u)=e^{A(u)T_S}$ with
\begin{displaymath}
A(0) = \left[ \begin{array} {ll}
0 & 1 \\
-1 & 0 
\end{array} \right],
\end{displaymath}
\begin{displaymath}
A(1) = \left[ \begin{array} {ll}
0 & 1 \\
k_o & 0 
\end{array} \right], \textrm{       } k_o \in \mathbb{R} \setminus \{-1\}.
\end{displaymath}
In this setting, applying inputs $u=0$ and $u=1$ effectively corresponds to switching between ``passive" and ``aggressive" control modes, respectively.
The sensor measurement $y$ is assumed to be available at the beginning of each sampling interval $[tT_S,(t+1)T_S)$
at which point a plant (or equivalently, a value of $k$) is chosen by the controller $K$ and held until the end of the sampling interval. 
The sampling interval $T_S$ is assumed to be a design parameter in this case.
In order to counteract some of the conservatism introduced in the computation of an upper bound for the gain of $\Delta$, the unit circle is 
uniformly partitioned into $2n$ intervals and the sampling rate is matched to this partition, that is $T_S = \frac{ \pi}{n}$. 
This ensures that a partition interval maps to another partition interval in the "passive" control mode. 

\begin{center}
   \begin{table}
   \label{Table:Parameters}
   \caption{Explanation of Reported Parameters.}
   \begin{tabular}{lcl}
   \hline
   \noalign{\smallskip}
   $n$ &  & 2$n$ is the total number of intervals partitioning the unit circle \\
   $N$ &  & \textit{Actual} number of states of the deterministic finite state machine approximation ($N \leq 2n(2n-1)+1$) \\
   $T_S$ &  & Sampling interval \\
   $\gamma$ &  & Tightest verifiable gain bound for the approximation error \\
   $R$ &  & Guaranteed rate of exponential convergence of the sampled system \\
   $p$ &  & Number of iterations required for convergence of the Value Iteration algorithm \\
   \hline
   \end{tabular}
   \end{table}
   \end{center}

\vspace*{20pt}
\begin{example}
\label{eg:1}
In particular, consider the case where $k_o=-3$. 
The coarsest partition of the unit circle that gives rise to a good enough approximate model allowing for successful control design corresponds to $n=5$. 
The relevant data for several choices of $n$ are shown in Table II, and the implementation of the resulting controllers 
is plotted in Figure \ref{Fig:Eg1}, together with the corresponding plot in the ``ideal" (full state feedback, unsampled)
setting  for comparison. 
Table I explains the significance of the various parameters reported for the examples. 
In this case, refining the partition by increasing $n$ improves both the provable gain bound for $\Delta$ and the rate of convergence up to a certain point.
The improvement seems to taper off beyond $n=15$, likely due to numerical errors.
Note that while $R$ decreases for the cases where $n=15$ and $n=20$, the actual rate of convergence still improves as seen 
in the simulations, thus reinforcing the fact that the actual rate of convergence may be significantly better than the provable rate.
\end{example}

 \begin{center}
   \begin{table}
   \label{Examplekminus3}
   \caption{Data for Example \ref{eg:1}.}
   \begin{tabular}{lllllll}
   \hline\noalign{\smallskip}
   $n$ \hspace{0.85in} & $N$ \hspace{0.85in} & $T_S$ \hspace{0.85in} & $\gamma$ \hspace{0.85in} & $R$  \hspace{0.85in} & $p$  \\
   \noalign{\smallskip}
   \hline
   \noalign{\smallskip}
   5 & 39 & 0.6283 & 0.75 & 0.0160 & 10 \\
   10 & 107 & 0.3142 & 0.625 & 0.0267 & 20 \\
   15 & 207 & 0.2094 & 0.5833 & 0.0195 & 28 \\
   20 & 331& 0.1571 & 0.5625 & 0.0152 & 39 \\
   \hline
   \end{tabular}
   \end{table}
   \end{center}

   \begin{figure}[thpb]
      \centerline{\psfig{file=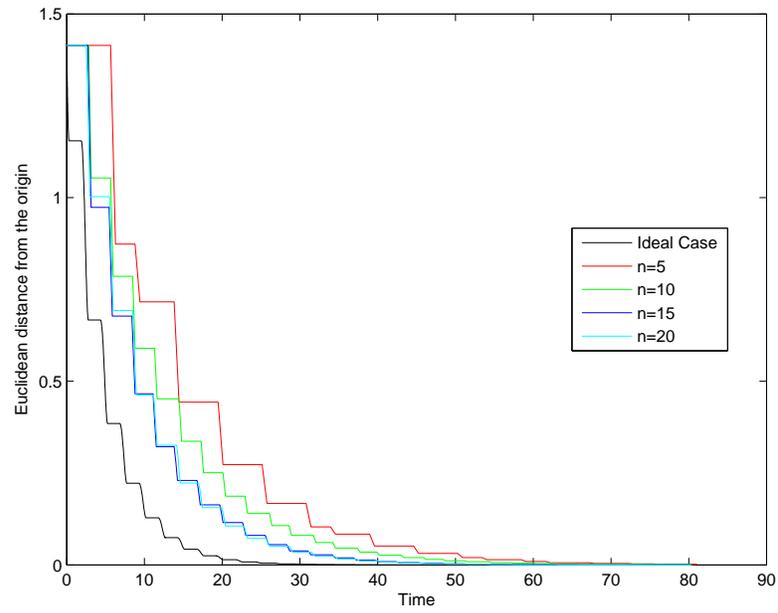,height=9cm}}
      \caption{Implementation of the DFM controller in Example 1}
      \label{Fig:Eg1}
   \end{figure}

\vspace*{20pt}
\begin{example}
\label{eg:2}
Now consider the case where $k_o=2$. 
In this case, the coarsest workable partition of the unit circle giving rise to a good enough approximate model allowing for 
successful control design corresponds to $n=6$.  
The relevant data for several choices of $n$ are shown in Table III, and the implementation of the resulting controllers 
is plotted in Figure \ref{Fig:Eg1-2}, together with the corresponding plot in the ``ideal" setting for comparison. 
Note in this case that even though the best provable gain bound for the error system $\Delta$ remains unchanged between $n=6$ and $n=7$, 
the difference between $\hat{v}$ and $v$ decreases as the unit circle partition is refined, hence the quality of approximation effectively 
improves as does the guaranteed rate of convergence.
Also note that in this case the improvement stalls much sooner, around $n=8$.
This is likely due to the fact that the ``aggressive" control law is much more aggressive than in the previous example, 
and hence numerical errors affect the computation sooner.
\end{example}

  \begin{center}
   \begin{table}
   \label{Examplek2}
   \caption{Data for Example \ref{eg:2}.}
   \begin{tabular}{lllllll}
   \hline\noalign{\smallskip}
   $n$ \hspace{0.85in} & $N$ \hspace{0.85in} & $T_S$ \hspace{0.85in} & $\gamma$ \hspace{0.85in} & $R$ \hspace{0.85in} &  $p$  \\
   \noalign{\smallskip}
   \hline
   \noalign{\smallskip}
   6 & 39 & 0.5236 & 1 & 0.0141 & 11 \\
   7 & 55 & 0.4488 & 1 & 0.0387 & 13 \\
   8 & 69 & 0.3927 & 1 & 0.0279 & 17 \\
      \hline
   \end{tabular}
   \end{table}
   \end{center} 
  
   \begin{figure}[thpb]
      \centerline{\psfig{file=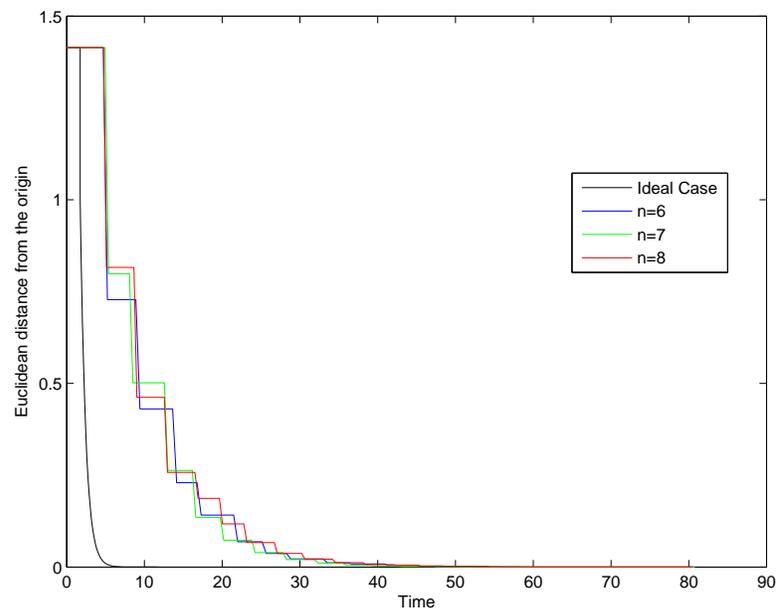,height=9cm}}
      \caption{Implementation of the DFM controller in Example \ref{eg:2}.}
      \label{Fig:Eg1-2}
   \end{figure}

%%%%%%%%%%%%%%%%%%%%%%%%%%%%%%%%%%%%%%%%%%%%%%%%%%%%%%%%%%%%%%%%%%%%%%%%%%%%%%%%
\section{CONCLUSIONS AND FUTURE WORK}
\label{sec:Conclusions}

A new constructive approach for designing switching controllers to stabilize pairs of discrete-time homogenous second order 
systems under limited information (due to binary sensors) was presented. 
The hybrid plant is first approximated by a deterministic finite state machine, and a
useable description of the resulting approximation error, in the form of a "gain" bound, is established. 
Using a "small gain" argument and dynamic programming techniques, a controller is then synthesized to robustly 
stabilize the nominal (approximate) model in the presence of admissible (approximation) uncertainty. 
The resulting stabilizing controller then consists of a deterministic finite state machine observer for the hybrid plant and 
a corresponding full state feedback switching law. 
While the proposed approach is inspired from classical robust control, the use of deterministic finite state machines as an alternative class of nominal models 
necessitates the development of a new set of complementary approximation, analysis and control tools.  

Future work will focus on three main directions:
\begin{enumerate} 
\item Reducing the conservatism of the approach: As mentioned in the paper, the proposed approach for computing gain bounds for the 
approximation uncertainty $\Delta$ is efficient (polynomial in the size of the nominal models) but conservative. 
One direction of future effort will thus focus on developing alternative tractable approaches for verifying tighter gain bounds, 
allowing for the design of stabilizing controllers with better performance. 
\item Extending the approach to broader problems: Another direction of future work will look into extending the proposed analysis, approximation 
and synthesis tools to broader classes of plants beyond homogeneous systems, in addition to identifying alternative performance objectives 
(beyond stability) that can be captured by "gain conditions". 
\item Addressing the issue of scalability: While the design approach presented here can, in principle, be applied to systems of arbitrary order, 
it scales poorly (exponentially) with plant order, as do most existing hybrid design techniques. 
Thus one important direction of future work is finding ways of improving the scalability of the approach, potentially by identifying and exploiting system structure.
\end{enumerate}

%%%%%%%%%%%%%%%%%%%%%%%%%%%%%%%%%%%%%%%%%%%%%%%%%%%%%%%%%%%%%%%%%%%%%%%%%%%%%%
%%% Acknowledgments
\section{Acknowledgments}

The first author was supported by AFOSR Grant FA9550-04-1-0052 while she was at MIT
and by startup funds from the Whiting School of Engineering while at The Johns Hopkins University.

%%%%%%%%%%%%%%%%%%%%%%%%%%%%%%%%%%%%%%%%%%%%%%%%%%%%%%%%%%%%%%
%%%% Appendices
\appendices

%%%%%%%%%%%%%%%%%%%%%%%
%%% APPENDIX
\section{Proof of Theorem \ref{thm:DP} } 

Several statements will be useful in proving Theorem \ref{thm:DP}. 
The first one, Theorem \ref{thm:Vstability}, is adapted from Theorem 3 in \cite{JOUR:TMD} and is presented here without proof.

\begin{theorem}
\label{thm:Vstability}
Consider a deterministic finite state machine $M$ with state transition equation
\begin{displaymath}
q(t+1)=f(q(t),u(t),w(t))
\end{displaymath} 
where $\mathcal{Q}$ is the state set and $\mathcal{U}$ and $\mathcal{W}$ are the input sets. 
Let $\sigma : \mathcal{Q} \times \mathcal{U} \times \mathcal{W} \rightarrow \mathbb{R}$ be a given function. 
The following two statements are equivalent:
\begin{itemize} 
\item[(a)] The inequality
\begin{displaymath}
\inf_{T \geq 0 }\sum_{t=0}^{T} \sigma(q(t),u(t),w(t)) > - \infty
\end{displaymath}
is satisfied for any $(\mathbf{u},\mathbf{w}) \in \mathcal{U}^{\mathbb{Z}_+} \times \mathcal{W}^{\mathbb{Z}_+}$ and $q(0) \in \mathcal{Q}$.
\item[(b)] There exists a non-negative function $ J : \mathcal{Q} \rightarrow \mathbb{R}_+$ such that the inequality
\begin{displaymath}
J(f(q,u,w)) - J(q) \leq \sigma(q,u,w)
\end{displaymath}
holds for all $q \in \mathcal{Q}$, $u \in \mathcal{U}$ and $w \in \mathcal{W}$.
\end{itemize} 
\end{theorem}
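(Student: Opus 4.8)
The plan is to prove Theorem \ref{thm:Vstability} as two separate implications, recognizing it as the finite-state analogue of Willems' dissipativity characterization: $\sigma$ plays the role of a supply rate and the sought $J$ that of a storage function. The direction (b) $\Rightarrow$ (a) is the routine telescoping argument already carried out in the proof of Lemma \ref{thm:GammaBound}. Given a nonnegative $J$ satisfying $J(f(q,u,w)) - J(q) \leq \sigma(q,u,w)$, summing along any trajectory of $M$ gives
\begin{displaymath}
\sum_{t=0}^{T} \sigma(q(t),u(t),w(t)) \;\geq\; J(q(T+1)) - J(q(0)) \;\geq\; -\max_{q \in \mathcal{Q}} J(q),
\end{displaymath}
and since the right-hand side is finite and independent of $T$, $(\mathbf{u},\mathbf{w})$ and $q(0)$, statement (a) follows immediately.

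The substance lies in (a) $\Rightarrow$ (b). I would construct the \emph{available-storage} function
\begin{displaymath}
J(q) = \sup_{N \geq 0}\ \sup_{(\mathbf{u},\mathbf{w})} \left( -\sum_{t=0}^{N-1} \sigma(q(t),u(t),w(t)) \right), \qquad q(0) = q,
\end{displaymath}
where the supremum ranges over all finite-horizon trajectories emanating from $q$ and the empty sum ($N=0$) contributes the value $0$. This convention makes $J \geq 0$ automatic, so the only two things to verify are that $J$ is real-valued (finite) and that it satisfies the dissipation inequality.

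Finiteness is the main obstacle, and it is exactly where the hypothesis (a) and the finiteness of $\mathcal{Q}$, $\mathcal{U}$, $\mathcal{W}$ are used together. I would view $M$ as a finite weighted directed graph whose nodes are the states $\mathcal{Q}$ and whose edges, labelled by $(u,w)$, carry weight $\sigma(q,u,w)$. The key claim is that (a) forbids cycles of strictly negative total weight: if some cycle $C$ had $\sum_{C}\sigma < 0$, then starting at any state of $C$ and traversing $C$ repeatedly produces a single admissible input sequence whose partial sums tend to $-\infty$, contradicting (a). Given that no negative cycle exists, I would then bound $J$ by decomposing an arbitrary walk from $q$ into a simple path together with a collection of cycles; since each excised cycle contributes nonnegatively to $\sum \sigma$, the quantity $\sum \sigma$ along the walk is no smaller than the minimum of $\sum \sigma$ over the finitely many simple paths. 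This yields a finite upper bound on $-\sum\sigma$, hence $J(q) < \infty$ for every $q$.

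Finally, the dissipation inequality follows from the dynamic-programming (Bellman) principle for $J$. Fixing $(q,u,w)$ and writing $q' = f(q,u,w)$, restricting the supremum defining $J(q)$ to those trajectories whose first transition is $(u,w)$ gives $J(q) \geq -\sigma(q,u,w) + J(q')$, which rearranges precisely to $J(f(q,u,w)) - J(q) \leq \sigma(q,u,w)$. Since this holds for all $q \in \mathcal{Q}$, $u \in \mathcal{U}$, $w \in \mathcal{W}$, and $J$ is a nonnegative real-valued function on $\mathcal{Q}$, statement (b) is established, completing the equivalence.
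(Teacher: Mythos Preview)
The paper does not actually prove Theorem~\ref{thm:Vstability}: it is explicitly ``adapted from Theorem~3 in \cite{JOUR:TMD} and is presented here without proof.'' So there is no in-paper argument to compare against.

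That said, your proposal is a correct and complete proof. The (b)$\Rightarrow$(a) telescoping is exactly the mechanism the paper itself uses in Lemma~\ref{thm:GammaBound}, and your (a)$\Rightarrow$(b) via the available-storage function is the standard Willems construction specialized to the finite-state setting. The one place where finiteness of $\mathcal{Q}$, $\mathcal{U}$, $\mathcal{W}$ is genuinely needed---showing $J(q)<\infty$---you handle cleanly: hypothesis~(a) rules out negative-weight cycles in the transition graph (a periodic repetition of such a cycle would be a single admissible $(\mathbf{u},\mathbf{w})$ whose partial sums diverge to $-\infty$), and then every finite walk from $q$ has $\sigma$-sum bounded below by the minimum $\sigma$-sum over the finitely many simple paths from $q$, which is a finite quantity. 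The Bellman step $J(q)\geq -\sigma(q,u,w)+J(f(q,u,w))$ follows immediately by restricting the outer supremum to trajectories with prescribed first transition. Nothing is missing.
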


In the terminology of Willem's theory of dissipative systems \cite{JOUR:Willem1972}, 
$J$ in Theorem \ref{thm:Vstability} is the storage function of the dissipative system with supply rate $\sigma$.

\vspace{2mm}
\begin{lemma}
\label{lemma:Tmonotonic}
Function $\mathbb{T}$ defined in (\ref{eq:Toperator}) is monotonic, that is 
\begin{displaymath}
J_{1} \leq J_{2} \Rightarrow \mathbb{T}(J_{1}) \leq \mathbb{T}(J_{2}).
\end{displaymath}
\end{lemma}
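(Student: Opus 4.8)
The plan is to prove the monotonicity of $\mathbb{T}$ directly from its definition in (\ref{eq:Toperator}) by tracking how the pointwise inequality $J_1 \leq J_2$ propagates through composition with $f$, then through the max over $w$, and finally through the min over $u$. This is a routine exercise in the order-preservation properties of the basic operations involved, and I expect no genuine obstacle — the only care needed is to apply the min and max steps in the right order and to note that adding a fixed function ($-\sigma$) preserves inequalities.

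First I would fix an arbitrary $q \in \mathcal{Q}$ and assume $J_1 \leq J_2$, i.e. $J_1(p) \leq J_2(p)$ for every $p \in \mathcal{Q}$. In particular, for any $u \in \mathcal{U}$ and $w \in \mathcal{W}$, evaluating at $p = f(q,u,w)$ gives $J_1(f(q,u,w)) \leq J_2(f(q,u,w))$. Adding the fixed quantity $-\sigma(q,u,w)$ to both sides preserves the inequality, so
\begin{displaymath}
-\sigma(q,u,w) + J_1(f(q,u,w)) \leq -\sigma(q,u,w) + J_2(f(q,u,w))
\end{displaymath}
for all $u \in \mathcal{U}$, $w \in \mathcal{W}$.

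Next I would take the maximum over $w \in \mathcal{W}$ on both sides: since the left-hand side is pointwise (in $w$) dominated by the right-hand side, its maximum is dominated by the maximum of the right-hand side, for each fixed $u$. Then I would take the minimum over $u \in \mathcal{U}$; again, since for each $u$ the left expression is $\leq$ the right expression, the minimum over $u$ of the left is $\leq$ the minimum over $u$ of the right. Combining these two steps yields exactly
\begin{displaymath}
\mathbb{T}(J_1(q)) = \min_{u}\max_{w}\{-\sigma(q,u,w)+J_1(f(q,u,w))\} \leq \min_{u}\max_{w}\{-\sigma(q,u,w)+J_2(f(q,u,w))\} = \mathbb{T}(J_2(q)).
\end{displaymath}
Since $q$ was arbitrary, $\mathbb{T}(J_1) \leq \mathbb{T}(J_2)$, which is the claim. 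The main (and only) point requiring attention is the elementary fact that both $\min$ and $\max$ are monotone operations on finite families of reals — this is what makes each of the two aggregation steps go through — so the proof is essentially a two-line chain of inequalities once the substitution $p = f(q,u,w)$ is made.
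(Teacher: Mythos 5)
Your proof is correct and follows essentially the same route as the paper's: propagate the pointwise inequality through $f$, add the fixed $-\sigma$ term, then use monotonicity of $\max_w$ followed by $\min_u$. (The paper's own displayed chain writes $+\sigma$ rather than $-\sigma$ inside the braces, a harmless sign slip relative to the definition of $\mathbb{T}$ in (\ref{eq:Toperator}); your version matches the definition, and the argument is identical either way.)
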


\begin{proof}
When $J_{1} \leq J_{2}$, we have
\begin{eqnarray*} 
& & J_1(q) \leq J_2(q), \textrm{   } \forall q \in \mathcal{Q} \\ 
& \Rightarrow & \sigma(q,u,w) + J_{1}(f(q,u,w)) \leq \sigma(q,u,w) + J_{2}(f(q,u,w)), \forall q \in \mathcal{Q}, u \in \mathcal{U}, w \in \mathcal{W} \\ 
& \Rightarrow & \max_{w \in \mathcal{W}} \{ \sigma(q,u,w) + J_{1}(f(q,u,w)) \} \leq \max_{w \in \mathcal{W}} \{ \sigma(q,u,w) + J_{2}(f(q,u,w)) \}, \forall q \in \mathcal{Q}, u \in \mathcal{U} \\ 
& \Rightarrow & \min_{u \in \mathcal{U}} \max_{w \in \mathcal{W}} \{ \sigma(q,u,w) + J_{1}(f(q,u,w)) \} \leq \min_{u \in \mathcal{U}} \max_{w \in \mathcal{W}} \{ \sigma(q,u,w) + J_{2}(f(q,u,w)) \}, \forall q \in \mathcal{Q} \\ 
& \Rightarrow & \mathbb{T}(J_{1})(q) \leq \mathbb{T}(J_{2})(q), \forall q \in \mathcal{Q} 
\end{eqnarray*}
Thus $\mathbb{T}(J_1) \leq \mathbb{T}(J_2)$.
\end{proof}

\begin{lemma}
\label{lemma:Jmonotonic}
The sequence $\{J_{k}\}$ defined in (\ref{eq:Jsequence}) is monotonically increasing.
\end{lemma}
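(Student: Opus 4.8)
The plan is to prove the claim by induction on $k$, leaning entirely on the monotonicity of the dynamic programming operator $\mathbb{T}$ established in Lemma \ref{lemma:Tmonotonic} together with the elementary fact that the pointwise operation $J \mapsto \max\{0,J\}$ is itself order-preserving on $\mathbb{R}^{\mathcal{Q}}$.

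For the base case, I would observe that $J_0 = 0$ and $J_1 = \max\{0,\mathbb{T}(J_0)\} \geq 0$ pointwise, so $J_0 \leq J_1$ holds trivially by the definition of $J_1$ as a pointwise maximum with the zero function.

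For the inductive step, I would assume $J_k \leq J_{k+1}$ for some $k \in \mathbb{Z}_+$. Applying Lemma \ref{lemma:Tmonotonic} gives $\mathbb{T}(J_k) \leq \mathbb{T}(J_{k+1})$. Since $a \leq b$ implies $\max\{0,a\} \leq \max\{0,b\}$ for real numbers $a,b$ (applied pointwise over $\mathcal{Q}$), it follows that
\begin{displaymath}
J_{k+1} = \max\{0,\mathbb{T}(J_k)\} \leq \max\{0,\mathbb{T}(J_{k+1})\} = J_{k+2}.
\end{displaymath}
By induction, $J_k \leq J_{k+1}$ for all $k \in \mathbb{Z}_+$, which is the assertion of the lemma.

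I do not anticipate a genuine obstacle here: the argument is a short two-line induction, and the only point worth stating explicitly is the order-preserving property of the truncation $\max\{0,\cdot\}$, which is immediate and requires no separate lemma. The substantive content has already been isolated in Lemma \ref{lemma:Tmonotonic}.
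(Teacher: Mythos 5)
Your proof is correct and is essentially identical to the paper's own argument: the same induction, with the base case $J_0 = 0 \leq J_1$ and the inductive step combining the monotonicity of $\mathbb{T}$ from Lemma \ref{lemma:Tmonotonic} with the order-preserving property of the pointwise truncation $\max\{0,\cdot\}$ (which the paper also uses, albeit implicitly). The only difference is a harmless index shift in how the inductive hypothesis is stated.
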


\begin{proof}
The proof is by induction on $k$. We have
\begin{displaymath}
J_{1} = \max \{0,\mathbb{T}(J_{0})\} \geq 0 = J_{0}.
\end{displaymath}
Now suppose $J_{k} \geq J_{k-1}$. Then
\begin{eqnarray} \nonumber
J_{k+1} & = & \max \{0, \mathbb{T}(J_{k})\} \\ \nonumber
& \geq & \max \{ 0, \mathbb{T}(J_{k-1}) \} \\ \nonumber 
& \geq & J_{k}
\end{eqnarray}
which completes the proof. 
\end{proof}

Given a scalar $c \in \mathbb{R}$ and a function  $J: \mathcal{Q} \rightarrow \mathbb{R}$, 
denote by $J + c$ the function from $\mathcal{Q}$ to $\mathbb{R}$ defined by $(J+c)(q) = J(q)+c$. 

\begin{lemma}
\label{lemma:Jlinearity}
For any $J: \mathcal{Q} \rightarrow \mathbb{R}$ and $c \in \mathbb{R}$, $\mathbb{T}(J+c)=\mathbb{T}(J)+c$.
\end{lemma}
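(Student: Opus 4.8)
The plan is to unwind the definition of the operator $\mathbb{T}$ in (\ref{eq:Toperator}) and observe that adding a constant $c$ to the argument function $J$ simply adds $c$ to the quantity appearing inside the $\min$-$\max$, after which the constant factors out of both the $\max_{w}$ and the $\min_{u}$. There are no obstacles here; the only point requiring a word of justification is that a $\min$ or $\max$ over a finite set commutes with the addition of a constant that does not depend on the optimization variable.

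Concretely, fix $q \in \mathcal{Q}$. By definition of $J+c$ we have, for all $u \in \mathcal{U}$ and $w \in \mathcal{W}$,
\begin{displaymath}
-\sigma(q,u,w) + (J+c)(f(q,u,w)) = \big(-\sigma(q,u,w) + J(f(q,u,w))\big) + c.
\end{displaymath}
Since $c$ is independent of $w$, taking $\max_{w \in \mathcal{W}}$ of both sides gives
\begin{displaymath}
\max_{w \in \mathcal{W}} \{-\sigma(q,u,w) + (J+c)(f(q,u,w))\} = \max_{w \in \mathcal{W}} \{-\sigma(q,u,w) + J(f(q,u,w))\} + c,
\end{displaymath}
and since $c$ is also independent of $u$, taking $\min_{u \in \mathcal{U}}$ of both sides yields
\begin{displaymath}
\mathbb{T}(J+c)(q) = \mathbb{T}(J)(q) + c.
\end{displaymath}
As $q \in \mathcal{Q}$ was arbitrary, this establishes $\mathbb{T}(J+c) = \mathbb{T}(J) + c$.

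I expect the main (and only) subtlety to be purely presentational: making sure the two "pull out the constant" steps are stated in the right order ($\max$ first, then $\min$) and that the finiteness of $\mathcal{U}$ and $\mathcal{W}$ is implicitly what guarantees these extrema are attained and the manipulation is valid. No induction, no appeal to the earlier lemmas, and no reference to the dynamics of $M$ beyond the formula for $\mathbb{T}$ is needed. The whole argument is a two- or three-line direct computation, so the proof in the paper is almost certainly just the display chain above.
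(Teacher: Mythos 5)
Your proof is correct and matches the paper's own argument, which is the same two-line computation: substitute $(J+c)(f(q,u,w)) = J(f(q,u,w))+c$ into the definition of $\mathbb{T}$ and pull the constant out of the $\min$-$\max$. (The paper's display writes $+\sigma$ rather than $-\sigma$ inside the braces, a typo relative to (\ref{eq:Toperator}); your version is consistent with the stated definition.)
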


\begin{proof}
For any $q \in \mathcal{Q}$ we have
\begin{eqnarray}
\nonumber
\mathbb{T}((J+c)(q)) & = & \min_{u \in \mathcal{U}} \max_{w \in \mathcal{W}} \{ \sigma(q,u,w) + (J+c)(f(q,u,w)) \} \\ \nonumber
& = & \min_{u \in \mathcal{U}} \max_{w \in \mathcal{W}} \{ \sigma(q,u,w) + J(f(q,u,w)) + c \} \\ \nonumber
& = & \min_{u \in \mathcal{U}} \max_{w \in \mathcal{W}} \{ \sigma(q,u,w) + J(f(q,u,w)) \} + c \\ \nonumber
& = & \mathbb{T}(J(q)) + c
\end{eqnarray}
\end{proof}

We are now ready to prove Theorem \ref{thm:DP}.

\begin{proof}
\textit{(Theorem \ref{thm:DP})} \\
(b) $\Rightarrow$ (a): Suppose there exists a function $J: \mathcal{Q} \rightarrow \mathbb{R}$ 
satisfying (\ref{eq:BellmanInequality}), and let
\begin{displaymath}
\varphi(q) = \arg\min_{u \in \mathcal{U}} \Big\{ \max_{w \in \mathcal{W}} \Big( - \sigma(q,u,w) + J(f(q,u,w)) \Big) \Big\}.
\end{displaymath}
For any $q \in \mathcal{Q}$, we have
\begin{eqnarray} \nonumber 
J(q) & \geq & \mathbb{T}(J(q)) \\ \nonumber
& = & \min_{u} \max_{w \in \mathcal{W}} \{ - \sigma(q,u,w) + J(f(q,u,w)) \} \\ \nonumber
& = & \max_{w \in \mathcal{W}} \{ - \sigma(q,\varphi(q),w) + J(f(q,\varphi(q),w)) \} \\ \nonumber
& \geq & - \sigma(q,\varphi(q),w) + J(f(q,\varphi(q),w)), \forall w \in \mathcal{W}.
\end{eqnarray}
It follows from Theorem \ref{thm:Vstability} that the (deterministic finite state machine) closed loop system 
$(M,\varphi)$ satisfies (\ref{eq:SigmaObjective}). 

(a) $\Rightarrow$ (c): Suppose there exists a $\varphi$ such that the closed loop system satisfies (\ref{eq:SigmaObjective}). 
By Theorem \ref{thm:Vstability}, there exists a $J : \mathcal{Q} \rightarrow \mathbb{R}_+$ such that
\begin{displaymath}
J(f(q,\varphi(q),w)) - J(q) \leq \sigma(q,\varphi(q),w), \forall q \in \mathcal{Q}, w \in \mathcal{W}  
\end{displaymath}
We then have
\begin{eqnarray}
\nonumber
J(q) & \geq &  - \sigma(q,\varphi(q),w) + J(f(q,\varphi(q),w)), \forall q \in \mathcal{Q},w \in \mathcal{W} \\ \nonumber
& \geq & \max_{w \in \mathcal{W}} \{ - \sigma(q,\varphi(q),w) + J(f(q,\varphi(q),w)) \}, \forall q \in \mathcal{Q} \\ \nonumber
& = & \min_{u \in \mathcal{U}} \max_{w \in \mathcal{W}} \{ - \sigma(q,u,w) + J(f(q,u,w)) \}, \forall q \in \mathcal{Q} \\ \nonumber
& = & \mathbb{T}(J(q)), \forall q \in \mathcal{Q}
\end{eqnarray}

It follows that function $J+c$ also satisfies
\begin{displaymath}
(J+c) \geq \mathbb{T}((J+c))
\end{displaymath}
for any choice of $c \in \mathbb{R}$. 
Since the set $\mathcal{Q}$ is finite, we can assume, without loss of generality, that $J \geq 0$ with $\displaystyle \min_{q}J(q)=0$ 
and $\displaystyle \max_{q}J(q)=m \geq 0$. 

Moreover, the sequence $\{J_{k}\}$ defined in (\ref{eq:Jsequence}) is bounded above by $J$. The proof is by induction on $k$. We have
\begin{displaymath}
J_{0} = 0 \leq J
\end{displaymath}
Suppose that $J_{k} \leq J$. Then
\begin{displaymath}
\mathbb{T}(J_{k}) \leq \mathbb{T}(J) \leq J
\end{displaymath}
where the first inequality follows from Lemma \ref{lemma:Tmonotonic} and
\begin{displaymath}
J_{k+1} = \max \{0,\mathbb{T}(J_{k})\} \leq J.
\end{displaymath}
Sequence $\{J_{k}\}$ is thus monotonically increasing (Lemma \ref{lemma:Jmonotonic}) and bounded above by $J$. 
Hence it converges to $\displaystyle J^{*} = \lim_{k \rightarrow \infty} J_{k} \leq J$.

(c) $\Rightarrow$ (b): Suppose that the sequence $\{J_{k}\}$ converges to $\displaystyle J^{*} = \lim_{k \rightarrow \infty} J_{k}$ 
and let 
\begin{displaymath}
\epsilon_k = \max_{q \in \mathcal{Q}} \{ J^*(q) - J_k(q) \}.
\end{displaymath} 
Note that $\epsilon_k \geq 0$, the sequence $\{ \epsilon_k \}$ is monotonically decreasing (follows from Lemma \ref{lemma:Jmonotonic}) 
and $\displaystyle \lim_{k \rightarrow \infty} \epsilon_k = 0$.
Moreover
\begin{displaymath}
J_{k}(q) \geq J^{*}(q) - \epsilon_{k}, \forall q \in \mathcal{Q}.
\end{displaymath}
It follows from Lemmas \ref{lemma:Tmonotonic}  and \ref{lemma:Jlinearity} that
\begin{displaymath}
\mathbb{T}(J_{k}) \geq \mathbb{T}(J^{*} - \epsilon_{k}) = \mathbb{T}(J^{*}) - \epsilon_{k}.
\end{displaymath}
But we have 
\begin{eqnarray}
\nonumber
J_{k+1} & = & \max{\{0,\mathbb{T}(J_k)\}} \\ \nonumber
& \geq & \mathbb{T}(J_{k}) \\ \nonumber
& \geq & \mathbb{T}(J^{*}) - \epsilon_{k}.
\end{eqnarray}
Thus
\begin{displaymath}
J^* = \lim_{k \rightarrow \infty} J_{k+1} \geq \mathbb{T}(J^{*}) - \lim_{k \rightarrow \infty} \epsilon_{k} = \mathbb{T}(J^*).
\end{displaymath}
\end{proof}

%%%%%%%%%%%%%%%%%%%%%%%%%%%%%%%%%%%%%%%%%%%%%%%%%%%%%%%%%%%%%%%%%%%%%%%%%%%%%%%%
%%% References

\bibliographystyle{IEEEtranS}
\bibliography{References}

\end{document}